\numberwithin{equation}{section}
\numberwithin{figure}{section}
\newtheorem{theorem}{Theorem}[section]
\newtheorem{proposition}[theorem]{Proposition}
\newtheorem{lemma}[theorem]{Lemma}
\theoremstyle{definition}
\newtheorem{definition}[theorem]{Definition}
\newtheorem{example}[theorem]{Example}
\newtheorem{remark}[theorem]{Remark}
\newcommand{\CC}{\mathbb{C} }
\newcommand{\ZZ}{\mathbb{Z} }
\newcommand{\RR}{\mathbb{R} }
\newcommand{\NN}{\mathbb{N} }
\newcommand{\cA}{\mathcal{A} }
\newcommand{\cC}{\mathcal{C} }
\newcommand{\cS}{\mathcal{S} }
\def\spec{\mathrm{Spec} }
\begin{document}

\title{Presentations of the Roger-Yang generalized skein algebra}
\date{\today}

\author{Farhan Azad}
\address{Department of Mathematics, Fordham University, New York, NY 10023}
\email{fazad2@fordham.edu}

\author{Zixi Chen}
\address{Department of Mathematics, Fordham University, New York, NY 10023}
\email{zchen218@fordham.edu}

\author{Matt Dreyer}
\address{Department of Mathematics, Cornell University, New York, NY 14853}
\email{mjd367@cornell.edu}

\author{Ryan Horowitz}
\address{Department of Mathematics, New York University, New York, NY 10012}
\email{reh420@nyu.edu}

\author{Han-Bom Moon}
\address{Department of Mathematics, Fordham University, New York, NY 10023}
\email{hmoon8@fordham.edu}

\begin{abstract}
We describe presentations of the Roger-Yang generalized skein algebras for punctured spheres with an arbitrary number of punctures. This skein algebra is a quantization of the decorated Teichm\"uller space and generalizes the construction of the Kauffman bracket skein algebra. In this paper, we also obtain a new interpretation of the homogeneous coordinate ring of the Grassmannian of planes in terms of skein theory. 
\end{abstract}

\maketitle

\section{Introduction}\label{sec:intro}

Since the Kauffman bracket skein algebra $\cS_{q}(\Sigma)$ of a closed surface $\Sigma$ was introduced by Przytycki (\cite{Przytycki91}) and Turaev (\cite{Turaev88}), based on Kauffman's skein theoretic description of the Jones polynomial (\cite{Kauffman87}), it has been one of the central objects in low-dimensional quantum topology. It has interesting connections with many branches of mathematics, including character varieties (\cite{Bullock97, BullockFrohmanJKB99, PrzytyckiSikora00}), Teichm\"uller spaces and hyperbolic geometry (\cite{BonahonWong11}), and cluster algebras (\cite{FominShapiroThurston08, Muller16}). 

Roger and Yang extended skein algebras to oriented surfaces with \emph{punctures} and defined the algebra $\cA_{q}(\Sigma)$ (\cite{RogerYang14}) by including arc classes. The algebra $\cA_{q}(\Sigma)$ is indeed a quantization of the decorated Teichm\"uller space (\cite{Penner87, RogerYang14}) and is also compatible with the cluster algebra from surfaces (\cite{MoonWong20}). Thus, it can be regarded as a good extension of $\cS_{q}(\Sigma)$ and strengthens the connections of the aforementioned subjects. 

For both $\cS_{q}(\Sigma)$ and $\cA_{q}(\Sigma)$, many algebraic properties have been shown. For example, they are finitely generated algebras (\cite{Bullock99, BKPW16JKTR}) without zero divisors (\cite{PrzytyckiSikora00, BonahonWong11, MoonWong19, MoonWong20}) with a few exceptions. However, very few examples of $\cS_{q}(\Sigma)$ and $\cA_{q}(\Sigma)$ with explicit presentations are known. If we denote by $\Sigma_{g, n}$ (resp. $\Sigma_{g}^{k}$) the oriented surface with genus $g$ and $n$ punctures (resp. $k$ boundary components), then a presentation of $\cS_{q}(\Sigma_{g}^{k})$ is known only for $g = 0$, $k \le 4$ and $g = 1$, $k \le 2$ cases (\cite{BullockPrzytycki00}). The presentation of $\cA_{q}(\Sigma_{g, n})$ is known for $g = 0$, $n \le 3$ and $g = 1$, $n \le 1$ (\cite{BKPW16Involve}).

The main result of this paper is a calculation of a presentation of $\cA_{q}(\Sigma_{0,n})$ for arbitrary $n$. Arrange $n$ punctures $v_{1}, v_{2}, \cdots, v_{n}$ in a small circle $C$ on $S^{2}$ clockwise. Let $\beta_{ij} = \beta_{ji}$ be the geodesic in $C$, which connects $v_{i}$ and $v_{j}$. 

\begin{theorem}[Theorem \ref{thm:matinthm}]\label{thm:mainthmintro}
The algebra $\cA_{q}(\Sigma_{0,n})$ is isomorphic to 
\[
	\ZZ[q^{\pm \frac{1}{2}}, v_{1}^{\pm}, v_{2}^{\pm}, \cdots, v_{n}^{\pm}]\langle \beta_{ij}\rangle_{1\le i < j \le n}/J
\]
where $J$ is the ideal generated by 
\begin{enumerate}
\item (Ptolemy relations) For any 4-subset $I = \{i, j, k, \ell\} \subset [n]$ in cyclic order, $\beta_{ik}\beta_{j\ell} = q\beta_{i\ell}\beta_{jk}+q^{-1}\beta_{ij}\beta_{k\ell}$;
\item (Quantum commutation relations) For any 4-subset $I = \{i, j, k, \ell\} \subset [n]$ in cyclic order, $\beta_{ij}\beta_{k\ell} = \beta_{k\ell}\beta_{ij}$. For any $3$-subset $I = \{i, j, k\} \subset [n]$ in cyclic order, $\beta_{jk}\beta_{ij} = q\beta_{ij}\beta_{jk} + (q^{-\frac{1}{2}}-q^{\frac{3}{2}})v_{j}^{-1}\beta_{ik}$;
\item ($\gamma$-relations) For any $i, j \in [n]$, $\gamma_{ij}^{+} = \gamma_{ij}^{-}$;
\item (Big circle relation) $\delta = -q^{2} - q^{-2}$.
\end{enumerate}
\end{theorem}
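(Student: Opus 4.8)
The plan is to construct a surjective algebra homomorphism from the presented algebra
\[
R := \ZZ[q^{\pm \frac{1}{2}}, v_1^{\pm}, \dots, v_n^{\pm}]\langle \beta_{ij}\rangle_{1\le i<j\le n}/J \longrightarrow \cA_q(\Sigma_{0,n})
\]
and then to upgrade it to an isomorphism by exhibiting matching free-module bases on the two sides. First I would define $\phi\colon R \to \cA_q(\Sigma_{0,n})$ on generators by sending each $v_i$ to the corresponding puncture variable and each $\beta_{ij}$ to the geodesic chord joining $v_i$ and $v_j$ inside the distinguished circle $C$. To see that $\phi$ descends to the quotient I would verify that each of the four families of relations generating $J$ holds in the skein algebra; these are all local, diagrammatic computations with the Roger--Yang relations. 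The Ptolemy relations come from resolving the unique crossing of the two diagonals $\beta_{ik}$ and $\beta_{j\ell}$ of a cyclically ordered quadrilateral. The quantum commutation relations come from resolving the crossings produced when two chords are stacked: disjoint chords commute exactly, whereas two chords sharing the endpoint $v_j$ are related near that puncture by the Roger--Yang puncture-skein relation, producing the stated $q$-powers together with the lower term proportional to $v_j^{-1}\beta_{ik}$. The $\gamma$-relations record that the two natural ways of expressing the loop enclosing $v_i,v_j$ as a product of chords agree, and the big-circle relation expresses that $C$ bounds a puncture-free disk on the opposite side of $S^2$ and is therefore a contractible loop, evaluating to $-q^2-q^{-2}$; this last relation is exactly what encodes the sphere topology rather than that of a disk.

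Surjectivity of $\phi$ is the assertion that the chords $\beta_{ij}$ and the puncture variables generate $\cA_q(\Sigma_{0,n})$, which should follow from the generation results recalled earlier in the paper, since every arc and every essential simple closed curve on $\Sigma_{0,n}$ can be isotoped into $C$ and resolved into chords.

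The substance of the argument is injectivity, which I would establish by producing an explicit basis of $R$ and showing $\phi$ carries it to a linearly independent set. The natural candidate is the set of \emph{non-crossing ordered monomials}: products $\beta_{i_1 j_1}\cdots \beta_{i_r j_r}$ of pairwise non-crossing chords, written in a fixed total order refining the cyclic order of indices, times a Laurent monomial in the $v_i$. To prove these span $R$ I would set up a rewriting system, using the commutation relations to sort any monomial into the chosen order, the Ptolemy relations to replace any crossing pair $\beta_{ik}\beta_{j\ell}$ by its two non-crossing resolutions, and the $\gamma$- and big-circle relations to eliminate closed loops. Termination can be arranged from a monomial order adapted to the crossing/cyclic structure, exactly as in the straightening law for the Grassmannian of planes, where each Ptolemy rewrite replaces a non-standard (crossing) monomial by standard ones that are strictly larger.

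The hard part is \emph{confluence}: one must check that all overlapping ambiguities of this rewriting system resolve to the same normal form, so that the non-crossing monomials are genuinely a free basis of $R$ and not merely a spanning set. This is the $q$-deformed analogue of the straightening law, and the critical ambiguities arise from configurations of five cyclically ordered punctures, where two inequivalent sequences of Ptolemy resolutions must be reconciled, together with the three-point commutation relation when the chords involved share endpoints. Once these are verified by Bergman's Diamond Lemma, the non-crossing monomials form a free basis of $R$. Since $\phi$ sends each such monomial to an element of $\cA_q(\Sigma_{0,n})$ whose associated multicurve (its leading term in the multicurve filtration) is the corresponding non-crossing collection of arcs, and since distinct collections give distinct, linearly independent multicurve basis elements, the images are linearly independent. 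Combined with surjectivity, this forces $\phi$ to send a basis of $R$ bijectively onto the multicurve basis of $\cA_q(\Sigma_{0,n})$, proving that $\phi$ is an isomorphism.
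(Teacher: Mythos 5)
Your skeleton (define $\phi$ on the $\beta_{ij}$, check the four relation families diagrammatically, get surjectivity from the generation results) matches the paper's first steps and is fine. The fatal gap is in your injectivity argument: the non-crossing monomials are \emph{not} a basis of $R = R_{q,n}\langle \beta_{ij}\rangle/J$, and their images in $\cA_{q}(\Sigma_{0,n})$ are \emph{not} linearly independent. You misread the role of the $\gamma$-relations and the Big circle relation: they are not rules for ``eliminating closed loops'' (there are no loop generators in $R$ at all). Written out in the generators $\beta_{ij}$ --- which is exactly what Lemmas \ref{lem:gammarelationdescription} and \ref{lem:bigcirclerelation} do --- they are nontrivial linear combinations of \emph{non-crossing} monomials. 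Concretely, for $n=3$ the relation $\gamma_{12}^{+}=\gamma_{12}^{-}$ becomes $(1+q)\beta_{12}=q^{1/2}v_{3}\beta_{23}\beta_{13}$, and both $\beta_{12}$ and $\beta_{23}\beta_{13}$ are non-crossing monomials (the chords meet only at the endpoint $v_{3}$); analogous dependencies exist for every $n\ge 3$. This single example breaks both halves of your argument. First, your Diamond Lemma computation cannot yield the non-crossing monomials as normal forms once the $\gamma$- and big-circle rules are included in the rewriting system, and if you omit them you have only presented $S_{q}$, not $R$. Second, your ``leading term in the multicurve filtration'' claim fails on the sphere: in $\cA_{q}(\Sigma_{0,3})$ the product $\beta_{23}\beta_{13}$ resolves by the Puncture-Skein relation at $v_{3}$ into $(q^{1/2}+q^{-1/2})v_{3}^{-1}\beta_{12}$, because \emph{both} resolutions are isotopic to $\beta_{12}$ once arcs may pass over the back of the sphere; so two distinct non-crossing monomials have proportional images. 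What your basis argument actually proves is the presentation of $\cA_{q}(\RR^{2}_{n})$ --- it is essentially the paper's Remark \ref{rem:basis} and Proposition \ref{prop:quantumbasis} --- not of $\cA_{q}(\Sigma_{0,n})$. The whole difficulty of the theorem is that the sphere relations destroy exactly the straightening-law structure you are relying on.

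The paper gets around this with a two-step structure that your proposal lacks. First it proves $\cA_{q}(\RR^{2}_{n})\cong R_{q,n}\otimes_{\ZZ[q^{\pm 1/2}]}S_{q}$ (Theorem \ref{thm:presentationAR2n}), where injectivity is obtained not from confluence but from a dimension count: $\CC\otimes_{\ZZ}R_{n}\otimes_{\ZZ}S$ is a domain of dimension $3n-3$ (the Grassmannian ring), and $\dim \CC\otimes_{\ZZ}\cC(\RR^{2}_{n})=3n-3$ is proved in Proposition \ref{prop:dimCR2n} by comparing transcendence degrees with $\cC(\Sigma_{0,n+1})$ through the map $j_{\#}$ and using the domain property from Theorem \ref{thm:domain}. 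Second, it shows that the kernel of the topological surjection $\iota_{\#}:\cA_{q}(\RR^{2}_{n})\to\cA_{q}(\Sigma_{0,n})$ is exactly the ideal generated by the $\gamma$- and big-circle relations, via an induction on a complexity measure for curves crossing the point at infinity (Proposition \ref{prop:kerisK}). To repair your proof you would need either this factorization, or a correct combinatorial indexing of a basis of $\cA_{q}(\Sigma_{0,n})$ by reduced multicurves on the sphere together with a confluence argument for the full set of relations in $J$; the latter is a genuinely harder problem than the one you solved.
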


The definition of $\gamma_{ij}^{\pm}$ and $\delta$, as well as their formulas in terms of the $\beta_{ij}$'s, are given in Section \ref{sec:relations}. We want to emphasize that each generator of $J$ has a very simple and explicit topological interpretation. See Section \ref{sec:relations} for the details. 

A key step of the proof is the computation of a presentation of $\cA_{q}(\RR^{2}_{n})$ (Section \ref{sec:presentationR2}), where $\RR^{2}_{n}$ is the plane with $n$ punctures. By finding a generating set and many relations (Sections \ref{sec:genetors} and \ref{sec:relations}), it is straightforward to construct a surjective homomorphism of the form
\[
	\bar{f} : \ZZ[q^{\pm \frac{1}{2}}, v_{1}^{\pm}, v_{2}^{\pm}, \cdots, v_{n}^{\pm}]\langle \beta_{ij}\rangle/K \to \cA_{q}(\RR^{2}_{n}),
\]
where $K$ is the ideal generated by Ptolemy relations and Quantum commutation relations. 

Similar to many other problems of finding presentations, a difficult non-trivial step is to show the injectivity of $\bar{f}$. To do so, we employ a technique from algebraic geometry, in particular the dimension theory. When $q = 1$, $\bar{f}$ is a surjective homomorphism of commutative algebras. The affine variety associated to $\CC \otimes_{\ZZ}\cA_{q}(\RR^{2}_{n})$ is a closed subvariety of the affine variety associated to $\CC \otimes_{\ZZ}\ZZ[q^{\pm \frac{1}{2}}, v_{1}^{\pm}, v_{2}^{\pm}, \cdots, v_{n}^{\pm}]\langle \beta_{ij}\rangle/K$. They have the same dimension and the latter is irreducible. Therefore, they are isomorphic and $\bar{f}$ is an isomorphism.

\begin{remark}\label{rem:ringSintro}
During the proof, we show that the presentation of $\cA_{q}(\RR^{2}_{n})$ with $q = 1$ is a ring extension of the homogeneous coordinate ring of the Grassmannian of planes. The ring has occurred in many different territories of mathematics including classical invariant theory, cluster algebras, and even computational biology (Remarks \ref{rem:ringS}, \ref{rem:interpretationofS}). Our result provides a skein theoretic interpretation of the same object.
\end{remark}

\begin{remark}\label{rem:smallnintro}
The method of the proof relies on the fact that $\cA_{q}(\Sigma_{0, n+1})$ is a domain, which was shown in \cite{MoonWong19} for $n \ge 3$. Thus, the proof is valid for $n \ge 3$. However, even for $n \le 2$, our presentation still coincides with the calculation in \cite{BKPW16Involve}. See Remark \ref{rem:smalln}. 
\end{remark}

\subsection*{Acknowledgements}
The last author thanks Helen Wong for helpful discussions and many valuable suggestions. The authors also thank the anonymous referees for valuable comments on earlier drafts of this paper.


\section{The Roger-Yang generalized skein algebra}\label{sec:skeinalgebra}

In this section, we present the definition and basic properties of the Roger-Yang generalized skein algebra $\cA_{q}(\Sigma)$.

Let $\overline{\Sigma}$ be an oriented surface without boundary, not necessarily compact nor connected. Let $V \subset \overline{\Sigma}$ be a finite subset of points and let $\Sigma = (\overline{\Sigma}, V)$. A point $v \in V$ is called a \emph{puncture} and $\Sigma$ is called a \emph{punctured surface}. We allow the case that $V = \emptyset$. In this paper, there are two relevant examples of a punctured surface. Let $\Sigma_{g, n}$ be the $n$-punctured genus $g$ surface. Let $\RR^{2}_{n}$ be the $n$-punctured plane. If $V$ is any $n$-subset of $\RR^{2}$, then $\RR^{2}_{n} = (\RR^{2}, V)$. 

\begin{definition}\label{def:curve}
Fix a punctured surface $\Sigma = (\overline{\Sigma} , V)$. A multicurve is a one-dimensional compact submanifold $\Gamma$ (possibly with boundary) of $\overline{\Sigma} \times (0, 1)$ satisfying the following properties:
\begin{enumerate}
\item $\partial \Gamma = V \times (0, 1) \cap \Gamma$;
\item the composition map $\Gamma \to \overline{\Sigma} \times (0, 1) \to \overline{\Sigma}$ is a generic immersion. 
\end{enumerate}
A \emph{curve} is a connected multicurve. A \emph{loop} is a curve without boundary, and an \emph{arc} is a curve with boundary. 
\end{definition}

To visualize a curve, we draw its \emph{diagram}. The second coordinate $t \in (0, 1)$ is the vertical coordinate oriented toward the reader. It encodes which strand is over/under another strand, as in Figure \ref{fig:Example1}. 

\begin{figure}[!ht]
\begin{minipage}{0.7in}\includegraphics[width=\textwidth]{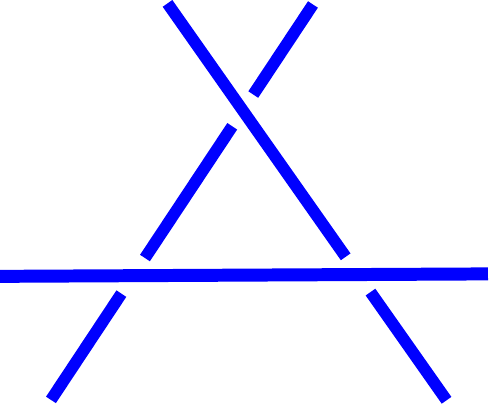}\end{minipage} \quad \quad
\begin{minipage}{0.7in}\includegraphics[width=\textwidth]{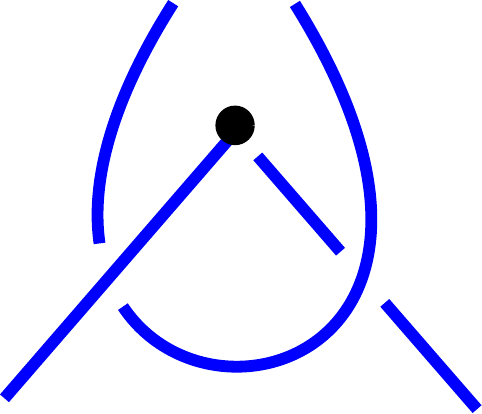}\end{minipage}
\caption{Examples of local planar diagram for curves}
\label{fig:Example1}
\end{figure}

We will always think about the \emph{regular isotopy classes} of multicurves. Roughly, two multicurves are regular isotopic if (1) they are homotopic, (2) each step in the deformation is a multicurve in the above sense, and (3) the deformation does not involve a Reidemeister move of type I. For the precise definition, consult \cite[Section 2]{RogerYang14}. We may assume that for any multicurve, the only multiple points on $\Sigma$ in the planar diagram above are double points. However, note that it is possible that there are more than two strands meeting at a puncture. 

There is a natural \emph{stacking operation} of multicurves. Let $\alpha, \beta$ be two multicurves. By rescaling the vertical coordinate, we may assume that $\alpha \subset \overline{\Sigma} \times (0, \frac{1}{2})$ and $\beta \subset \overline{\Sigma} \times (\frac{1}{2}, 1)$. Then $\alpha * \beta$ is defined as `stacking' $\beta$ over $\alpha$: $\alpha *\beta := \alpha \cup \beta$. 

\begin{definition}\label{def:skeinalgebra}
Let $\Sigma = (\overline{\Sigma}, V)$ be a punctured surface. Suppose that $V = \{v_{1}, v_{2}, \cdots, v_{n}\}$. Let $R_{q, n} := \ZZ[q^{\pm \frac{1}{2}}, v_{1}^{\pm}, v_{2}^{\pm}, \cdots, v_{n}^{\pm}]$, which is the commutative Laurent polynomial ring with respect to $q^{\frac{1}{2}}, v_{1}, \cdots, v_{n}$ with integer coefficients. The \emph{generalized skein algebra} $\cA_{q}(\Sigma)$ is an $R_{q, n}$-algebra generated by regular isotopy classes of multicurves in $\Sigma$. The addition and scalar multiplication are formal, but the multiplication is given by the stacking operation $\alpha \beta := \alpha *\beta$. The algebra $\cA_{q}(\Sigma)$ has four types of relations:

\begin{tabular}{llrcl}
1) &\mbox{Skein relation} & \quad \begin{minipage}{0.4in}\includegraphics[width=\textwidth]{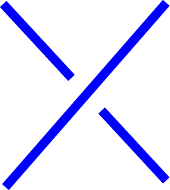}\end{minipage} & $=$ & $q$ \begin{minipage}{0.4in}\includegraphics[width=\textwidth]{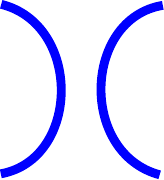}\end{minipage} $+\; q^{-1}$ \begin{minipage}{0.4in}\includegraphics[width=\textwidth]{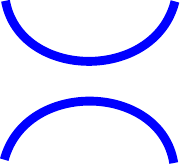}\end{minipage}\\
2) & \mbox{Puncture-Skein relation} & \begin{minipage}{0.4in}\includegraphics[width=\textwidth]{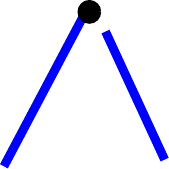}\end{minipage} & $=$ & $v^{-1}\left(q^{\frac{1}{2}}\right.$ \begin{minipage}{0.4in}\includegraphics[width=\textwidth]{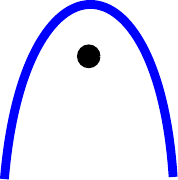}\end{minipage} $+\; q^{-\frac{1}{2}}$ \begin{minipage}{0.4in}\includegraphics[width=\textwidth]{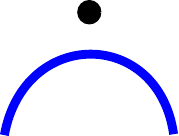}\end{minipage}$\left.\right)$\\
3) & \mbox{Framing relation} & \quad \begin{minipage}{0.35in}\includegraphics[width=\textwidth]{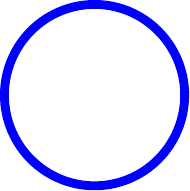}\end{minipage} & $=$ & $-q^{2} - q^{-2}$\\[10pt]
4) & \mbox{Puncture-Framing relation} & \quad \begin{minipage}{0.35in}\includegraphics[width=\textwidth]{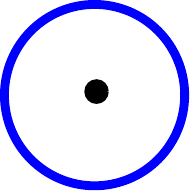}\end{minipage} & $=$ & $q+q^{-1}$
\end{tabular}
\end{definition}

\begin{example}\label{ex:skeinexample}
For each vertex $v_{i} \in V$, the \emph{waterdrop} $\omega_{i}$ at $v_{i}$ is the small arc class starting at $v_{i}$, turning around counterclockwise, and ending at $v_{i}$. We assume that the ending point is higher than the starting point. By using the Puncture-Skein relation, Framing relation, and Puncture-Framing relation, one may check that 
\[
	\omega_{i} = \mbox{\begin{minipage}{0.2in}\includegraphics[width=\textwidth]{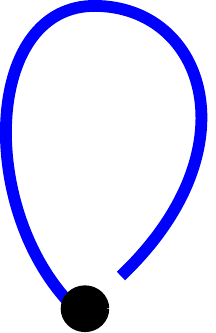}\end{minipage}} = v_{i}^{-1}\left(q^{\frac{1}{2}} \mbox{\begin{minipage}{0.25in}\includegraphics[width=\textwidth]{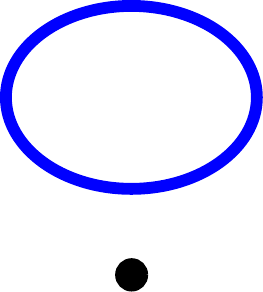}\end{minipage}} + q^{-\frac{1}{2}}\mbox{\begin{minipage}{0.25in}\includegraphics[width=\textwidth]{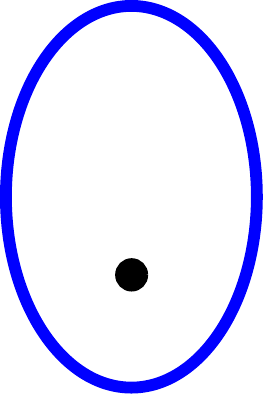}\end{minipage}}\right) = \left(q^{\frac{1}{2}}(-q^{2} - q^{-2}) + q^{-\frac{1}{2}}(q+q^{-1})\right)v_{i}^{-1} = (q^{\frac{1}{2}} - q^{\frac{5}{2}})v_{i}^{-1}.
\]
\end{example}

\begin{example}\label{ex:conjugate}
For any curve class $\alpha \in \cA_{q}(\Sigma)$, the \emph{conjugate} $\overline{\alpha}$ of $\alpha$ is the new curve obtained by reversing all of the crossing data. Equivalently, it is induced by the map $(x, t) \mapsto (x, 1-t)$ from $\Sigma \times (0, 1)$ to itself. Then the conjugation map $\alpha \mapsto \overline{\alpha}$ is an anti-involution on $\cA_{q}(\Sigma)$. A calculation shows that $\overline{\omega_{i}} = (q^{-\frac{1}{2}} - q^{-\frac{5}{2}})v_{i}^{-1} = -q^{3}\omega_{i} = q\omega_{i} + (q^{-\frac{1}{2}} - q^{\frac{3}{2}})(-q^{2} - q^{-2})v_{i}^{-1}$. 
\end{example}

\begin{remark}
The original definition in \cite{RogerYang14} is a ring-theoretic completion of $\cA_{q}(\Sigma)$ in Definition \ref{def:skeinalgebra}. The original paper \cite{RogerYang14} did not address non-compact $\overline{\Sigma}$ cases, but the construction can be done in the same way. However, its connection to hyperbolic geometry (\cite[Section 3]{RogerYang14}) cannot be directly extended.  
\end{remark}

In \cite{RogerYang14}, the authors defined the \emph{curve algebra} $\cC(\Sigma)$ which is the classical limit of $\cA_{q}(\Sigma)$, which can be described by using \emph{immersed} curves on $\overline{\Sigma}$. For the detail of the construction, see \cite[Section 2.2]{RogerYang14}. Algebraically, the curve algebra $\cC(\Sigma)$ can be obtained by setting $q^{\frac{1}{2}} = 1$, i.e., $\cC(\Sigma) = \cA_{1}(\Sigma) = \cA_{q}(\Sigma)/(q^{\frac{1}{2}} - 1)$. Thus, if we set 
\begin{equation}\label{eqn:Rn}
	R_{n} := R_{q, n}/(q^{\frac{1}{2}}-1) \cong \ZZ[v_{1}^{\pm},  v_{2}^{\pm}, \cdots, v_{n}^{\pm}],
\end{equation}
$\cC(\Sigma)$ is an $R_{n}$-algebra. The algebra $\cC(\Sigma)$ is a commutative algebra and has a Poisson algebra structure. Moreover, $\cA_{q}(\Sigma)$ is a deformation quantization of $\cC(\Sigma)$ (\cite[Theorem 2.13]{RogerYang14}). 

We leave a few known structural results on $\cA_{q}(\Sigma)$. 

\begin{theorem}[\protect{\cite[Theorem 1]{Bullock99}, \cite[Theorem 2.2]{BKPW16JKTR}}]\label{thm:finitegeneration}
The algebra $\cA_{q}(\Sigma_{g, n})$ (and hence $\cC(\Sigma_{g, n})$) is finitely generated. 
\end{theorem}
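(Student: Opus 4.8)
The plan is to reduce an arbitrary multicurve to a bounded-complexity normal form using the four defining relations, and then to show that the multicurves of bounded complexity already generate the whole algebra. First I would fix a finite combinatorial backbone for $\Sigma_{g,n}$. Since $\Sigma_{g,n}$ has finite topological type, when $n \ge 1$ and the Euler characteristic is negative it admits a finite ideal triangulation $\Delta$ with a fixed finite edge set; in the remaining small cases, and when $n = 0$ (where $\cA_{q}(\Sigma_{g,0})$ reduces to the Kauffman bracket setting), one replaces $\Delta$ by a finite spine onto which the surface deformation retracts. Every multicurve can be isotoped to meet $\Delta$ transversally and minimally, and I would define the complexity $c(\alpha)$ to be the total geometric intersection number of the loop part of $\alpha$ with the edges of $\Delta$, recorded together with the number of arc endpoints and the strand count at each puncture.

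Next, the Skein relation and the Puncture-Skein relation serve as reduction moves. Each crossing, when resolved, is rewritten as a $\ZZ[q^{\pm \frac{1}{2}}]$-linear (respectively $v^{-1}$-weighted) combination of two multicurves with strictly fewer crossings, hence strictly smaller complexity; the Framing and Puncture-Framing relations dispose of the trivial and puncture-bounding loops that appear along the way. This lets me express any multicurve as a linear combination of multicurves in minimal position with respect to $\Delta$, carrying no inessential crossings.

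The crucial step is a reduction lemma: there is a bound $N$, depending only on $\Delta$, such that any minimal-position multicurve $\alpha$ with $c(\alpha) > N$ can be written as a product $\alpha' \alpha''$ of two nonempty minimal-position multicurves, modulo terms of strictly smaller complexity. Concretely, a component traversing an edge of $\Delta$ too many times is decomposed by cutting along that edge and re-stacking the pieces; the stacking reproduces $\alpha$ up to crossings, and those crossings resolve into strictly lower complexity by the previous paragraph. Iterating, every multicurve lies in the subalgebra generated by the finitely many multicurves with $c(\alpha) \le N$, a finite collection because minimal-position multicurves of bounded intersection with a fixed finite edge set are finite in number. This proves finite generation of $\cA_{q}(\Sigma_{g,n})$, and passing to the quotient $\cC(\Sigma_{g,n}) = \cA_{q}(\Sigma_{g,n})/(q^{\frac{1}{2}}-1)$ immediately yields the same finite generating set for the curve algebra.

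The main obstacle is the reduction lemma, and in particular making the complexity filtration genuinely compatible with the relations in the presence of arcs. I would need to check that $c$ is well defined on regular isotopy classes (or else work consistently with minimal representatives), that the Puncture-Skein relation—which reconfigures the strands meeting at a puncture and introduces the scalar $v^{-1}$—strictly lowers complexity rather than merely redistributing it, and that the cut-and-restack decomposition never stalls on components winding around a handle or a puncture without increasing edge intersection. The genuinely new feature beyond Bullock's closed-surface argument is precisely the handling of arcs terminating at punctures, where more than two strands may meet, and this is where the careful bookkeeping with the filtration is concentrated.
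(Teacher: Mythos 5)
The paper does not actually prove Theorem \ref{thm:finitegeneration}: it is imported by citation from \cite{Bullock99} and \cite{BKPW16JKTR}, so the only meaningful comparison is with those proofs. Your proposal follows essentially the same strategy as they do: fix a finite combinatorial skeleton, filter multicurves by geometric intersection number with it, use the Skein and Puncture-Skein relations to resolve crossings, and prove a reduction lemma expressing any curve of large complexity as a product of lower-complexity curves modulo lower-complexity terms. The only cosmetic difference is that Bullock (and \cite{BKPW16JKTR} following him) uses a disk-with-bands decomposition and counts band traversals rather than an ideal triangulation and edge intersections; your ``cut along an edge and re-stack'' mechanism is exactly Bullock's trick, in which the stacked product of the two pieces differs from the original curve by a single crossing whose other resolution is a turn-back curve of strictly smaller complexity. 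The place where your sketch is thinner than the actual proofs is the one you flag yourself: checking that the Puncture-Skein relation genuinely decreases the filtration (it preserves the number of strand-ends at a puncture and only trades a crossing for the scalar $v^{-1}$), and that the induction cannot stall on arcs winding near punctures where more than two strands meet; that bookkeeping is precisely the new content of \cite{BKPW16JKTR} beyond Bullock's closed-surface argument. As a plan, then, your proposal is sound and faithful to the cited arguments, with the key lemma correctly identified and its proof mechanism correctly described, though not carried out in detail.
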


The proof of \cite[Theorem 10.5]{MoonWong19} tells us the following result. 

\begin{theorem}[\protect{\cite[Theorem 10.5]{MoonWong19}}]\label{thm:classicalimpliesquantum}
Let $\Sigma$ be a punctured surface. If $\cC(\Sigma)$ is an integral domain, then $\cA_{q}(\Sigma)$ is a domain (there is no zero divisor). 
\end{theorem}

\begin{theorem}[\protect{\cite[Theorem 5.1 and Section 4]{MoonWong19}}]\label{thm:domain}
There is a function $f(g)$ such that for $n \ge f(g)$, $\cC(\Sigma_{g, n})$ (and hence $\cA_{q}(\Sigma_{g, n})$ by Theorem \ref{thm:classicalimpliesquantum}) is a domain. When $g = 0$, $\cA_{q}(\Sigma_{0, n})$ and $\cC(\Sigma_{0, n})$ are domains for $n \ge 4$. 
\end{theorem}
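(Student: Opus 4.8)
The plan is to prove the domain property by exhibiting a well-behaved basis of multicurves together with a filtration whose associated graded algebra embeds into a quantum torus, which is manifestly a domain. First I would establish a basis theorem: the regular isotopy classes of \emph{reduced} multicurves (those containing no contractible loop, no loop bounding a once-punctured disk, no arc isotopic into a puncture, etc.) form a free $R_{q,n}$-basis of $\cA_{q}(\Sigma_{g,n})$. This is the Roger--Yang analogue of the Przytycki--Sikora basis theorem for the Kauffman bracket skein algebra; it requires checking that the four defining relations of Definition \ref{def:skeinalgebra} suffice to rewrite every diagram as an $R_{q,n}$-linear combination of reduced multicurves, and that no further linear relation survives among them.

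Next I would fix an ideal triangulation $\Delta$ of $\Sigma_{g,n}$; this is precisely where the hypothesis $n \ge f(g)$ (respectively $n \ge 4$ when $g = 0$) enters, since the surface must have negative Euler characteristic and enough punctures to carry a triangulation with no monogon or bigon. For a reduced multicurve $\alpha$ I would define its degree to be the vector of geometric intersection numbers of $\alpha$ with the edges of $\Delta$, ordered lexicographically, thereby filtering $\cA_{q}(\Sigma_{g,n})$ by an ordered monoid. The central computation is that when two multicurves in minimal position relative to $\Delta$ are stacked, each application of the skein or puncture-skein relation needed to rewrite the product in the reduced basis strictly lowers this degree, so the top-degree part of $\alpha\beta$ is a single monomial $q^{c(\alpha,\beta)}(\alpha \cup \beta)$ obtained by superimposing the two diagrams. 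It follows that the associated graded $\mathrm{gr}\,\cA_{q}(\Sigma_{g,n})$ is a $q$-commutative skew-polynomial algebra in the edge classes that embeds into a quantum torus over the domain $R_{q,n}$, and hence is itself a domain. By the standard fact that a filtered algebra whose associated graded is a domain is a domain, $\cA_{q}(\Sigma_{g,n})$ has no zero divisors. Setting $q^{1/2} = 1$ collapses the leading-term coefficients to ordinary commuting monomials, so the same argument shows $\cC(\Sigma_{g,n})$ is a domain.

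The hard part will be the central computation that resolving each crossing strictly decreases the intersection-number filtration and that the leading-term map is genuinely multiplicative. For the ordinary skein relation this is the familiar statement that smoothing a crossing reduces the number of intersection points with $\Delta$, but the puncture-skein relation is considerably more delicate: it trades a strand passing over a puncture for arc configurations weighted by the puncture variables $v_i^{-1}$, and one must verify that all resulting terms lie in strictly lower filtration degree while the superimposed monomial remains nonzero. Constructing genuine minimal-position representatives in the presence of arcs, and controlling these puncture interactions so that no unexpected cancellation occurs in the top degree, is the main technical obstacle, and it is exactly this control that forces the restriction to sufficiently many punctures.
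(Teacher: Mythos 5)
Your opening step is fine (it is Proposition \ref{prop:freeness} of this paper), and your overall strategy---compare $\cA_{q}(\Sigma_{g,n})$ with a quantum torus attached to an ideal triangulation---is the same family of argument as the actual proof, which this paper does not give but imports from \cite{MoonWong19}: there, Moon and Wong build a quantum analogue of Penner's lambda-length coordinates and embed $\cA_{q}(\Sigma_{g,n})$ into a quantum torus associated to a \emph{locally planar} ideal triangulation. However, your specific mechanism---filter by the vector of geometric intersection numbers with the edges of $\Delta$ and pass to the associated graded---fails, and it fails exactly at the point you flagged as the main obstacle, the puncture-skein relation.

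Here is a concrete counterexample to your central claim that every application of the (puncture-)skein relation strictly lowers the degree. Take $\Sigma_{0,4}$ with the tetrahedral ideal triangulation $\Delta$, and let $\alpha$ and $\beta$ be two edges of $\Delta$ sharing the puncture $v_{i}$, viewed as reduced arcs; both have intersection vector $0$ (intersections at the punctures are not counted). Stacking them and applying the puncture-skein relation at $v_{i}$ gives $\alpha\beta = v_{i}^{-1}\bigl(q^{\frac{1}{2}}\gamma_{+} + q^{-\frac{1}{2}}\gamma_{-}\bigr)$, where $\gamma_{\pm}$ are the arcs obtained by joining the two strands and pushing them off $v_{i}$ to one side or the other (this is exactly the computation behind the second Quantum commutation relation, Definition \ref{def:quantumcommutation}). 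One smoothing, say $\gamma_{+}$, can be isotoped into the triangle of $\Delta$ spanned by $\alpha$ and $\beta$ and has degree $0$; but $\gamma_{-}$ wraps around $v_{i}$ the other way and crosses the remaining edge of $\Delta$ at $v_{i}$, so it has strictly \emph{positive} degree. Thus the product of two degree-zero basis elements has a term of higher degree: your filtration is not an algebra filtration, the top-degree part of a product is not the superimposed monomial, and the associated graded is not even defined, let alone a domain. A second symptom of the same disease: all $6g-6+3n$ edges of $\Delta$ (and every arc contained in a single triangle) have degree $0$, so edge-intersection numbers cannot give a faithful monomial picture of arc classes. Any repair must use coordinates that record how arcs meet the punctures---this is precisely what the lambda-length coordinates of \cite{MoonWong19} accomplish, and their embedding proceeds by localizing at the edge classes rather than by taking leading terms. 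Note also that the hypothesis $n \ge f(g)$ is not, as you suggest, about excluding monogons and bigons: $\Sigma_{1,1}$ and $\Sigma_{0,3}$ admit monogon- and bigon-free ideal triangulations yet are excluded; what the hypothesis buys is the stronger local planarity of the triangulation on which the Moon--Wong quantum trace is constructed.
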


\begin{remark}\label{rem:smallncases}
When $n = 0$, $\cA_{q}(\Sigma_{g, 0})$ is the classical Kauffman skein algebra $\cS_{q}(\Sigma_{g, 0})$. Przytycki and Sikora showed that $\cA_{q}(\Sigma_{g, 0})$ is a domain (\cite{PrzytyckiSikora00}).
\end{remark}

For any finitely generated commutative algebra $A$ over $\CC$ we may define the (Krull) dimension of $A$ (\cite[Section I.1]{Hartshorne77}). This is equal to the dimension of its associated affine algebraic variety $\spec \; A$ (\cite[Proposition I.1.7.]{Hartshorne77}). When $A$ is an integral domain, this is equal to the transcendental degree of the field of fractions $Q(A)$ of $A$ (\cite[Exercise II.3.20.]{Hartshorne77}). 

\begin{proposition}\label{prop:dimension}
Let $T$ be a triangulation of $\Sigma_{g, n}$ where its zero-skeleton is the set of punctures of $\Sigma_{g, n}$. For the same range of $n$ in Theorem \ref{thm:domain}, the field of fractions $Q(\CC \otimes_{\ZZ} \cC(\Sigma_{g, n}))$ is transcendentally generated by edges in $T$. Thus, the dimension of the $\CC$-algebra $\CC \otimes_{\ZZ} \cC(\Sigma_{g, n})$ is the number of edges for a triangulation $T$, which is $6g - 6 + 3n$. In particular, for $n \ge 4$, $\dim \CC \otimes_{\ZZ} \cC(\Sigma_{0, n}) = 3n - 6$. 
\end{proposition}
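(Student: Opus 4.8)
The plan is to reduce the statement to a single transcendence-degree computation and then to evaluate that degree using the lambda-length coordinates of Penner's decorated Teichm\"uller theory, made algebraic in \cite{MoonWong19}. In the stated range $\cC(\Sigma_{g,n})$ is a domain (Theorem \ref{thm:domain}) and is finitely generated (Theorem \ref{thm:finitegeneration}), so $\CC \otimes_{\ZZ} \cC(\Sigma_{g,n})$ is a finitely generated integral domain over $\CC$. Hence, as recalled just before the statement, its Krull dimension equals $\mathrm{tr.deg}_{\CC} Q(\CC \otimes_{\ZZ} \cC(\Sigma_{g,n}))$, and it suffices to exhibit a transcendence basis of the fraction field and count it.

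Next I would fix an \emph{ideal triangulation} $T$ of $\Sigma_{g,n}$ (with all vertices at the punctures) and invoke the trace/lambda-length map used in \cite{RogerYang14, MoonWong19} to prove Theorem \ref{thm:domain}: it realizes $\CC \otimes_{\ZZ} \cC(\Sigma_{g,n})$ as a subalgebra of the Laurent polynomial ring $\CC[\lambda_e^{\pm} : e \in T]$, sending each edge $e \in T$ to the coordinate variable $\lambda_e$, each loop to a Laurent polynomial in the $\lambda_e$ (the loop being resolved into arcs by the classical skein relation and each arc pushed through $T$ by the Ptolemy relations), and each puncture variable $v_i$ to the horocyclic-length function of the edges meeting $v_i$, again a Laurent polynomial in the $\lambda_e$. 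The crucial point, supplied by \cite{MoonWong19}, is that this map is injective on the given range, so $Q(\CC \otimes_{\ZZ} \cC(\Sigma_{g,n}))$ is isomorphic to the fraction field of the image.

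I would then identify that fraction field exactly. On one hand the image lies in $\CC[\lambda_e^{\pm} : e \in T]$, whose fraction field is $\CC(\lambda_e : e \in T)$; on the other hand the image contains every $\lambda_e$, since $\lambda_e$ is the image of the edge class $e \in \cC(\Sigma_{g,n})$. Hence the fraction field of the image is precisely $\CC(\lambda_e : e \in T)$. By Penner's theorem (\cite{Penner87}) the lambda-lengths of the edges of $T$ form a global coordinate system on the decorated Teichm\"uller space, so the $\lambda_e$ are algebraically independent and therefore constitute a transcendence basis. Thus $\mathrm{tr.deg}_{\CC} Q(\CC \otimes_{\ZZ} \cC(\Sigma_{g,n})) = |T|$, and an Euler-characteristic count finishes the proof: an ideal triangulation has $n$ vertices, $E$ edges and $F$ triangles with $3F = 2E$ and $n - E + F = 2 - 2g$, whence $E = 6g - 6 + 3n$. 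Specializing to $g = 0$ and $n \ge 4$ gives $3n - 6$.

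I expect the only genuinely delicate point to be the second step, namely that the trace map is an injective algebra homomorphism landing in the subring generated by the edge lambda-lengths, so that loops and the $v_i$ really are Laurent polynomials in the $\lambda_e$ and no extra transcendentals are introduced. This is exactly the content extracted from \cite{MoonWong19} that also yields Theorem \ref{thm:domain}; once it is in hand, the identification of the fraction field, the algebraic independence of the $\lambda_e$, and the edge count are all routine.
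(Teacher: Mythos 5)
Your proposal is correct and follows essentially the same route as the paper: the paper's proof simply invokes \cite[Lemma 3.4]{MoonWong19}, which says that a suitable localization of $\CC \otimes_{\ZZ} \cC(\Sigma_{g,n})$ \emph{is} the Laurent polynomial ring $\CC[\lambda_{e}^{\pm}]$ in the edge variables of a triangulation, then notes that localization of a domain does not change the fraction field, so $Q(\CC \otimes_{\ZZ} \cC(\Sigma_{g,n})) \cong \CC(\lambda_{e})$, and finishes with the same Euler characteristic count --- which is exactly your argument, phrased via an embedding whose image contains the $\lambda_{e}$ rather than via localization. The only cosmetic difference is that in the paper's formulation the $\lambda_{e}$ are formal Laurent variables, so their algebraic independence is automatic and no appeal to Penner's coordinate theorem is needed.
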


\begin{proof}
By \cite[Lemma 3.4]{MoonWong19}, after a certain localization, $\CC \otimes_{\ZZ}\cC(\Sigma_{g, n})$ is isomorphic to $\CC [\lambda_{i}^{\pm}]$, where $\lambda_{i}$ is a variable for each edge $x_{i}$ in $T$. The localization does not affect  the field of fractions, so $Q(\CC \otimes_{\ZZ} \cC(\Sigma_{g, n})) \cong Q(\CC[\lambda_{i}^{\pm}]) \cong \CC(\lambda_{i})$. The remaining statements are immediate from an Euler characteristic calculation.
\end{proof}

Finally, we state the following module theoretic result. 

\begin{definition}\label{def:reducedcurves}
A \emph{reduced} curve is a curve class without any self-crossing (both on the interior and at a puncture) on the planar diagram that is neither a trivial loop nor a punctured loop. A multicurve is \emph{reduced} if it is a finite union of reduced curves without any crossings. For a notational convention, we will regard the empty set as a reduced multicurve.
\end{definition}

\begin{proposition}\label{prop:freeness}
Fix a surface $\Sigma = (\overline{\Sigma}, V)$ with $|V| = n$. The algebra $\cA_{q}(\Sigma)$ (resp. $\cC(\Sigma)$) is a free $R_{q, n}$-module (resp. $R_{n}$-module) with a basis consisting of reduced multicurves, with one exception when $\Sigma = \Sigma_{0, 1}$ (see Remark \ref{rem:smalln}). 
\end{proposition}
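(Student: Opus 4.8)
The plan is to establish both \emph{spanning} and \emph{linear independence} of the reduced multicurves, treating $\cC(\Sigma)$ as the specialization $q^{\frac{1}{2}} = 1$ of $\cA_{q}(\Sigma)$ so that a single argument covers both. I would present $\cA_{q}(\Sigma)$ as the quotient of the free $R_{q,n}$-module $F$ on all regular isotopy classes of multicurve diagrams by the submodule $N$ generated by the four local relations of Definition \ref{def:skeinalgebra}, and then show that the reduced multicurves descend to a basis by exhibiting those relations as a terminating, locally confluent rewriting system and invoking the diamond lemma.

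\emph{Spanning.} First I would show that every multicurve class lies in the $R_{q,n}$-span of reduced multicurves. Given a diagram $D$, I apply the skein relation to each interior double point and the puncture-skein relation to each double point occurring at a puncture to eliminate all crossings, and then apply the framing and puncture-framing relations to delete all trivial loops and punctured loops. Each use of relation (1) or (2) strictly decreases the number of double points, and each use of relation (3) or (4) strictly decreases the number of removable components; hence an induction on the lexicographically ordered pair (number of double points, number of removable components) terminates at an $R_{q,n}$-combination of reduced multicurves.

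\emph{Independence.} This is the substantive part. I would promote the reductions above to rewriting rules on $F$ and verify the two hypotheses of the diamond lemma. Termination is the complexity measure just described. For local confluence I must check that every overlap ambiguity resolves to the same element of $F$: resolving two distinct double points in either order agrees (this is precisely the well-definedness underlying Reidemeister II/III invariance of the Kauffman resolution); resolving a crossing and deleting a trivial component commutes with performing those operations in the opposite order; and, most delicately, the ambiguities localized at a single puncture where several strands meet must be reconciled using the puncture-skein relation together with the puncture-framing value $q + q^{-1}$ so that the competing resolution orders coincide, the computation behind Example \ref{ex:conjugate} being a local shadow of such a check. With termination and local confluence in hand, Newman's lemma yields a unique normal form, and the diamond lemma identifies the normal forms---exactly the reduced multicurves---with an $R_{q,n}$-basis of $F/N = \cA_{q}(\Sigma)$; setting $q^{\frac{1}{2}} = 1$ gives the $R_{n}$-statement for $\cC(\Sigma)$.

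\emph{Main obstacle and the exception.} The principal difficulty is the confluence verification at a puncture: since more than two strands may meet there and the puncture-skein relation carries the factor $v^{-1}$, the overlap configurations are richer than in the closed Kauffman setting, and one must confirm that no sequence of relations identifies two distinct reduced multicurves with proportional elements of $F$. This is precisely what breaks for $\Sigma = \Sigma_{0,1}$: on the once-punctured sphere every arc is isotopic to the waterdrop, which by Examples \ref{ex:skeinexample} and \ref{ex:conjugate} equals a scalar multiple of the empty multicurve, so the would-be basis is $R_{q,1}$-dependent (cf.\ Remark \ref{rem:smalln}); that case must therefore be excluded from the statement.
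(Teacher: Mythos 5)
Your strategy (present $\cA_{q}(\Sigma)$ as a quotient of the free module on diagram classes, exhibit the relations as a terminating rewriting system, and extract a basis of normal forms via the diamond lemma) is the right kind of argument, and it is essentially the argument of \cite{RogerYang14}; the paper itself does none of this work but simply quotes the proof of \cite[Theorem 2.4]{RogerYang14} for the classical case and obtains the quantum case by a coefficient-ring extension. Measured as a proof, however, your proposal has a genuine gap: the entire mathematical content of the proposition is the confluence verification, i.e.\ that the resolution of a diagram into reduced multicurves is independent both of the order in which crossings are resolved \emph{and} of the chosen diagram representing the regular isotopy class (invariance under the Reidemeister II and III moves and under the Roger--Yang moves at punctures where several strands meet). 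You correctly name this as the principal difficulty, but you never carry it out, and your parenthetical conflates the easy check (disjoint local moves commute) with the hard one (isotopy invariance of the total resolution). What you have is a plan whose completion amounts to reproving \cite[Theorem 2.4]{RogerYang14}.

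There is also a concrete error: your diagnosis of the exception $\Sigma_{0,1}$. The waterdrop identity $\omega_{1}=(q^{\frac{1}{2}}-q^{\frac{5}{2}})v_{1}^{-1}$ of Example \ref{ex:skeinexample} holds on \emph{every} punctured surface and is harmless for freeness, because a waterdrop has a crossing at its puncture and hence is not reduced; likewise no arc from $v_{1}$ to itself is reduced, so none of these objects belong to the proposed basis. On $\Sigma_{0,1}$ the only reduced multicurve is $\emptyset$, so freeness cannot fail through a dependence among several basis elements --- it fails through torsion. Concretely, a small circle around the unique puncture bounds a once-punctured disk on one side and an unpunctured disk (the rest of the sphere) on the other, so the Framing and Puncture-Framing relations both apply to the same class and force $q+q^{-1}=-q^{2}-q^{-2}$, i.e.\ $(q^{2}+q+q^{-1}+q^{-2})\cdot\emptyset=0$; hence $\cA_{q}(\Sigma_{0,1})\cong R_{q,1}/(q^{2}+q+q^{-1}+q^{-2})$ is not free, exactly as recorded in Remark \ref{rem:smalln}. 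Note that this is precisely an unresolvable overlap between your rules (3) and (4) --- the very kind of ambiguity your confluence check is supposed to detect --- so the fact that your analysis of the exceptional case points at the wrong relation is itself evidence that the deferred confluence step cannot be waved through.
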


\begin{proof}
The classical case is in the proof of \cite[Theorem 2.4]{RogerYang14}. The quantum case is obtained from the fact that $\cA_{q}(\Sigma)$ is embedded into a topologically free algebra (\cite[Theorem 2.4]{RogerYang14}) which does not have any relation among basis vectors. 
\end{proof}


\section{Generators}\label{sec:genetors}

In this section, we describe a collection of curves in $\RR^{2}_{n}$ and $\Sigma_{0,n}$ and show that they generate $\cA_{q}(\RR^{2}_{n})$ and $\cA_{q}(\Sigma_{0,n})$ as $R_{q, n}$-algebras.  

Let $\overline{\Sigma}$ be $\RR^{2}$ or $S^{2}$. We may arrange $n$ punctures arbitrarily. Take a small circle $C$ on $\overline{\Sigma}$. From now on, we assume that the $n$ punctures $v_{1}, v_{2}, \cdots, v_{n}$ lie on $C$ in clockwise order. Let $P$ be the convex polygon inscribed in $C$, whose vertices are $v_{1}, v_{2}, \cdots, v_{n}$ (Figure \ref{fig:PDR}).

\begin{definition}\label{def:betacurves}
For any pair $i < j$ in $[n] := \{1, 2, \cdots, n\}$, let $\beta_{ij}$ be the regular isotopy class of the geodesic in $C$, which connects $v_{i}$ and $v_{j}$. For notational convenience, we set $\beta_{ji} = \beta_{ij}$. 
\end{definition}

\begin{figure}[!ht]
\begin{minipage}{2in}\includegraphics[width=\textwidth]{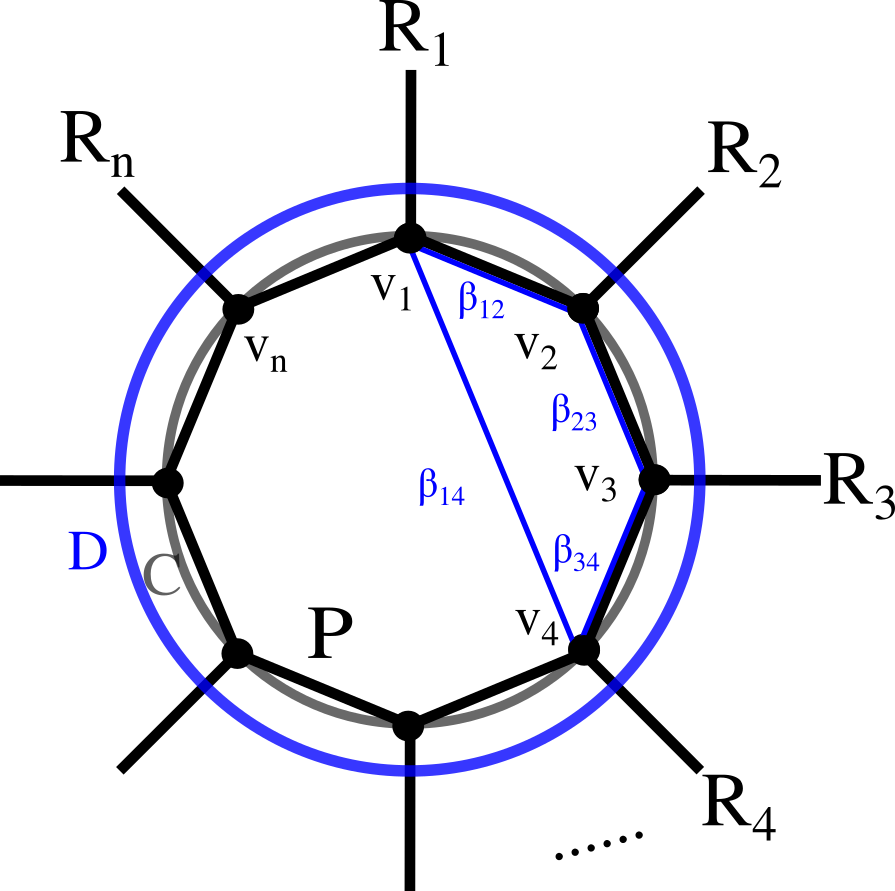}\end{minipage}
\caption{$\beta_{ij}$ classes and configuration of $P$, $D$, and $R$.}
\label{fig:PDR}
\end{figure}

Thus, $\beta_{ij}$ classes can be represented by the diagonals and sides of $P$.

\begin{proposition}\label{prop:generators}
As an $R_{q, n}$-algebra, $\cA_{q}(\Sigma_{0,n})$ and $\cA_{q}(\RR^{2}_{n})$ are generated by $\{\beta_{ij}\}_{1 \le i < j \le n}$. 
\end{proposition}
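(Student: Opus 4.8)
The plan is to show that the $\beta_{ij}$ classes, together with the Laurent ring $R_{q,n}$, suffice to express an arbitrary reduced multicurve, which by Proposition \ref{prop:freeness} gives an $R_{q,n}$-module basis for $\cA_{q}(\RR^{2}_{n})$ and $\cA_{q}(\Sigma_{0,n})$. Since multiplication in the skein algebra is stacking, it is enough to express every individual reduced curve (arc or loop) as a polynomial in the $\beta_{ij}$ with coefficients in $R_{q,n}$; products of such curves are then handled automatically by the algebra structure. So the real content is a statement about single reduced curves.

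First I would dispose of the loops. On $\RR^{2}_{n}$ and $S^{2}\setminus V$ every loop bounds a disk on one side (up to the finitely many punctures it may enclose), so I would argue that any reduced loop can be pushed, via the skein and puncture-skein relations, into a combination of arcs running between punctures. The cleanest approach is: an embedded loop separates the punctures into two groups; using the puncture-skein relation to resolve a crossing created by pushing the loop across a puncture, one trades the loop for arcs terminating at punctures (the waterdrop and conjugate computations in Examples \ref{ex:skeinexample} and \ref{ex:conjugate} are the prototype for this kind of reduction). For the sphere $\Sigma_{0,n}$ one must also invoke the framing relation and puncture-framing relation to eliminate trivial and puncture-parallel loops. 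The upshot is that the subalgebra generated by $\{\beta_{ij}\}$ already contains all loop classes.

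Second, and this is where the main work lies, I would handle arbitrary reduced arcs. A reduced arc connects two punctures $v_{i}, v_{j}$ but in general wanders around the other punctures rather than staying inside the small circle $C$ as the generator $\beta_{ij}$ does. The strategy is an induction on geometric complexity — say, the number of intersections of the arc with the polygon $P$, or with a fixed collection of the $\beta_{ij}$'s, or the number of punctures the arc wraps around. At each step I would isolate an innermost crossing or an outermost excursion of the arc past some puncture $v_{k}$, and apply the puncture-skein relation at $v_{k}$ to resolve it; this replaces the arc by an $R_{q,n}$-linear combination (with a factor of $v_{k}^{-1}$) of two simpler configurations, each with strictly smaller complexity. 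Iterating drives every arc down to the base case of a geodesic chord of $C$, i.e. a $\beta_{ij}$. The key point making the induction well-founded is that each resolution strictly decreases the chosen complexity measure while only introducing scalars from $R_{q,n}$ and arcs that still terminate at punctures.

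The hard part will be setting up the complexity measure so that the puncture-skein resolutions genuinely terminate, and checking that the two resolved pieces are both strictly simpler rather than merely different. A naive count of intersection points with $P$ can fail to drop monotonically when resolving a crossing creates new incidences, so I expect to need a more careful invariant — for instance a lexicographically ordered pair such as (number of punctures enclosed, number of self/mutual crossings) — and a verification that the puncture-skein move respects it. I would also need to confirm that the reduction never leaves the span of the $\beta_{ij}$, which is really the assertion that resolving at a puncture can only produce arcs between punctures together with the already-controlled loops, never some genuinely new class. Once the single-arc case is established, combining it with the loop reduction and Proposition \ref{prop:freeness} finishes the proof for both surfaces simultaneously, the only difference between $\RR^{2}_{n}$ and $\Sigma_{0,n}$ being the extra relations available on the sphere, which only help.
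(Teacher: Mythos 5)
Your overall architecture coincides with the paper's: reduce to reduced curves via Proposition \ref{prop:freeness}, express loops in terms of arcs by the recursion of \cite[Proposition 2.2]{BKPW16Involve} (valid on both surfaces because every loop separates), and then run an induction on a geometric complexity measure for reduced arcs, resolving via the Puncture-Skein relation. However, the step you yourself flag as ``the hard part'' --- exhibiting a complexity measure for which the induction actually closes --- is precisely the content of the paper's proof, and neither of your candidate measures works. Counting intersections with $P$ (or with a fixed family of $\beta_{ij}$'s) fails already at the \emph{base case}, not merely at monotonicity: the arc $\gamma_{ij}^{+}$ of Definition \ref{def:gammarelations} lies entirely outside $P$, so it meets $P$ only at its endpoints and would have measure zero, yet it is not regular isotopic to any $\beta_{ij}$ in $\RR^{2}_{n}$. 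Your fallback invariant, the lexicographic pair (number of punctures enclosed, number of self/mutual crossings), is also empty of content here: a reduced arc has no crossings at all by Definition \ref{def:reducedcurves}, and an open arc does not enclose a well-defined set of punctures.

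The missing idea in the paper is to enlarge the reference object: take pairwise disjoint rays $R_{i}$ from each puncture $v_{i}$ toward infinity and set $R = P \cup \bigcup_{i=1}^{n} R_{i}$ (Figure \ref{fig:PDR}), and define the measure $\underline{i}(\alpha,R)$ as the \emph{minimal} number of intersections with $R$ (vertices excluded) over the regular isotopy class of $\alpha$. This repairs both ends of the induction simultaneously: the complement $\RR^{2}\setminus\bigcup_{i} R_{i}$ is simply connected, so an arc with $\underline{i}(\alpha,R)=0$ can be contracted into $P$ and is one of the $\beta_{ij}$; and if $\underline{i}(\alpha,R)>0$ then $\alpha$ must cross some ray $R_{i}$, so applying the Puncture-Skein relation at $v_{i}$ to the strand of $\alpha$ nearest $v_{i}$ gives $\alpha = q^{\frac{1}{2}}v_{i}\gamma_{1}\gamma_{2} - q\delta$ (Figure \ref{fig:intnumreduction}), where all three curves $\gamma_{1},\gamma_{2},\delta$ have strictly smaller measure. (A minor slip: this resolution produces the scalar $v_{i}$, not $v_{i}^{-1}$, though both are units of $R_{q,n}$.) Without this reference complex, or something equivalent to it, your induction has no valid base case and cannot start; with it, your sketch becomes the paper's proof.
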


\begin{proof}
Any multicurve $\alpha$ is generated by reduced multicurves by Proposition \ref{prop:freeness}. Each multicurve is a product of reduced curves. Thus, $\cA_{q}(\Sigma_{0, n})$ and $\cA_{q}(\RR^{2}_{n})$ are generated by reduced curves. 

By \cite[Proposition 2.2]{BKPW16Involve}, any reduced loop class is generated by reduced arc classes. For the reader's convenience, we describe an example of the recursive relation in Figure \ref{fig:recursion}. Indeed, in \cite[Proposition 2.2]{BKPW16Involve}, the authors proved the statement for $\Sigma_{0,n}$ only. However, their proof only relies on the fact that every loop in $\Sigma_{0,n}$ divides the surface into two components. Thus, the same proof works for $\RR^{2}_{n}$.

\begin{figure}[!ht]
\begin{minipage}{0.7in}\includegraphics[width=\textwidth]{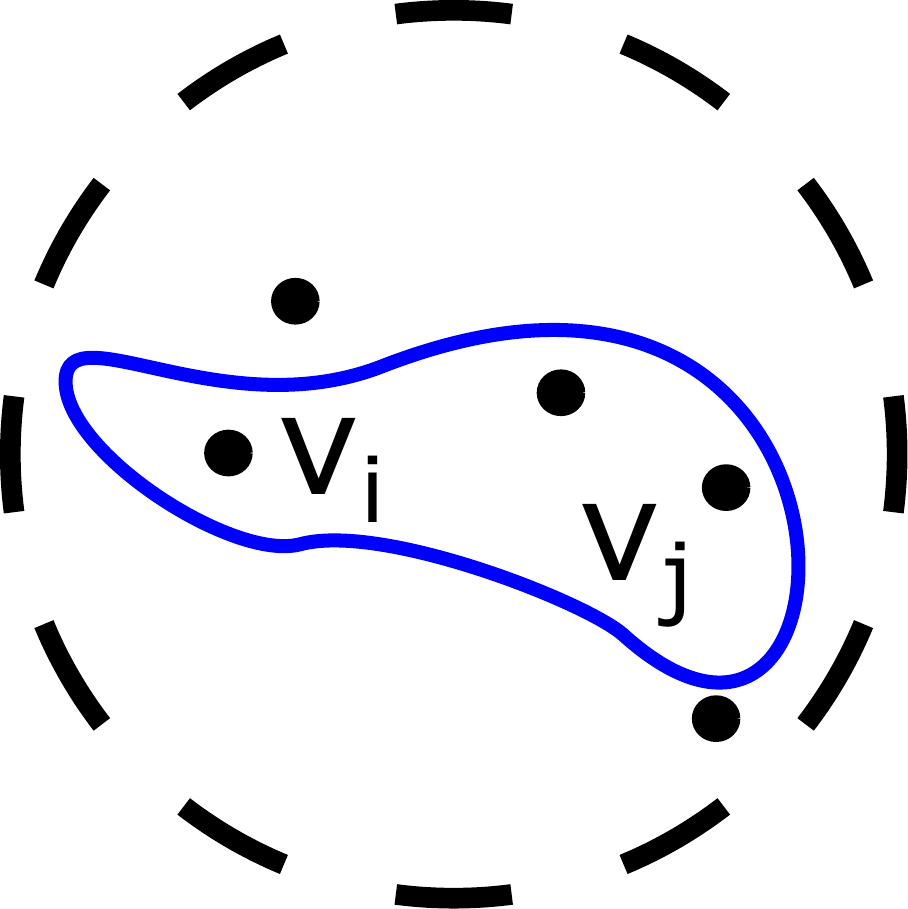}\end{minipage} $\;= \;v_{i}v_{j}$ \begin{minipage}{0.7in}\includegraphics[width=\textwidth]{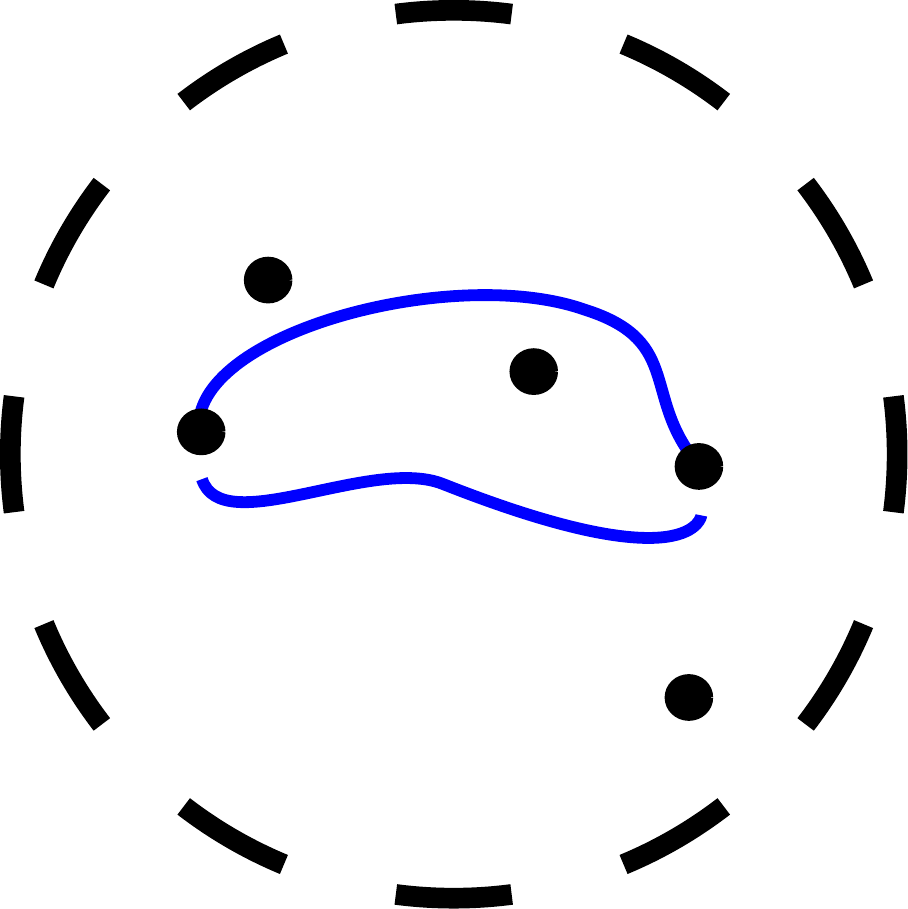}\end{minipage} $\;-\; q$ \begin{minipage}{0.7in}\includegraphics[width=\textwidth]{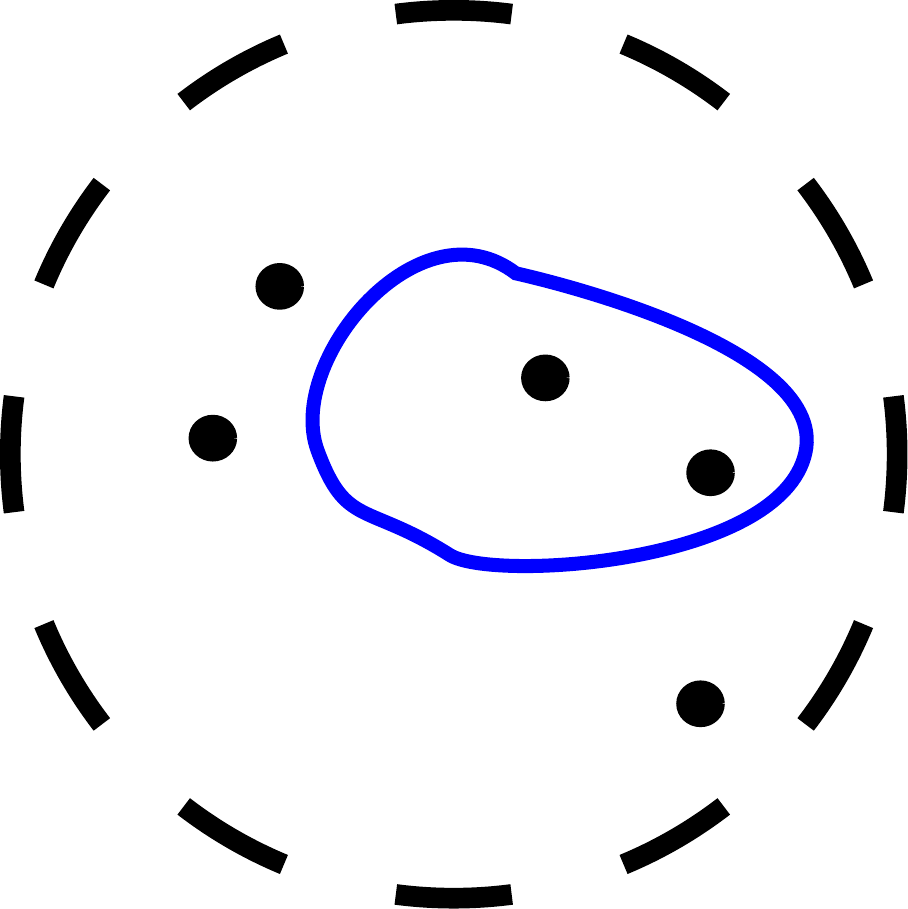}\end{minipage}$\;-\; q^{-1}$ \begin{minipage}{0.7in}\includegraphics[width=\textwidth]{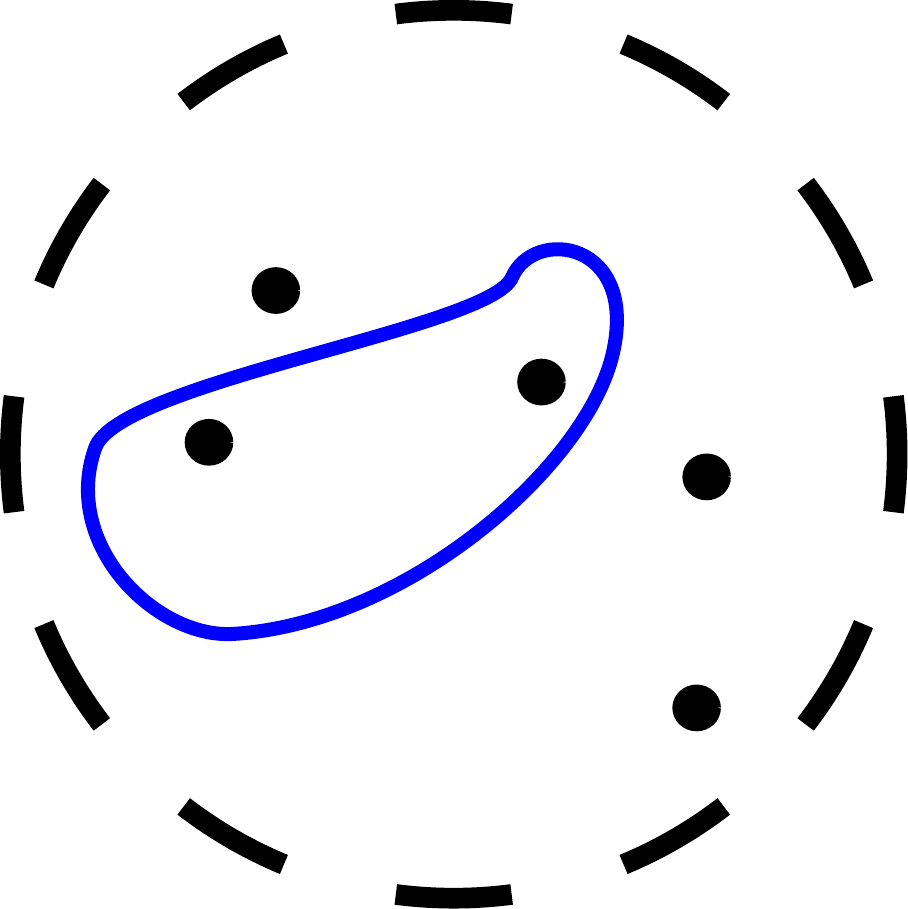}\end{minipage}$\;-\; $ \begin{minipage}{0.7in}\includegraphics[width=\textwidth]{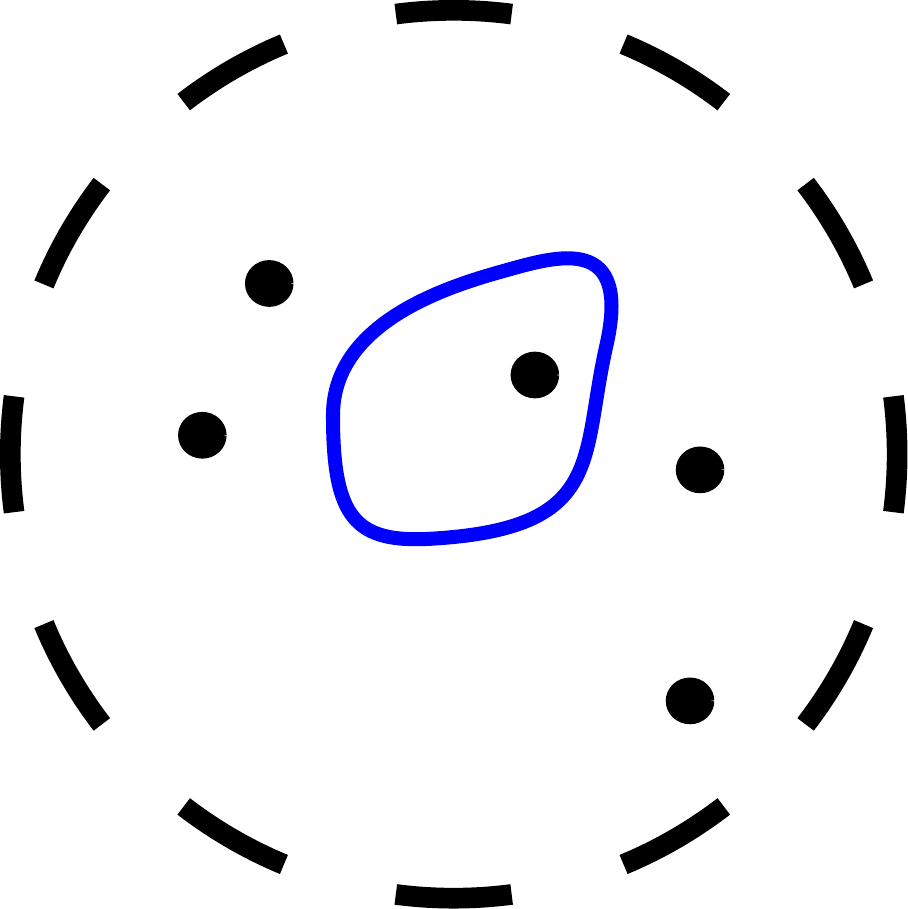}\end{minipage}
\caption{Generating a loop class by arc classes}
\label{fig:recursion}
\end{figure}

Now, we show that every reduced arc class is generated by $\{\beta_{ij}\}_{1 \le i < j \le n}$. Take a circle $D$, which properly contains $C$ (Figure \ref{fig:PDR}). Any multicurve in $\RR^{2}_{n}$ or $\Sigma_{0, n}$ is regular isotopic to a multicurve in $D$. To show this statement, we will apply skein relations in $D$. Thus, the proof will be identical in the case of $\RR^{2}_{n}$ and $\Sigma_{0, n}$. So from now on, we will focus on the $\RR^{2}_{n}$ case. 

Let $R := P \cup \bigcup_{i=1}^{n}R_{i}$, where $R_{i}$ is a ray emanating from each vertex $v_{i}$ toward the outer direction so that these rays are disjoint (Figure \ref{fig:PDR}). 

For each reduced arc $\alpha$, we define $i(\alpha, R)$ as the number of intersection points of the planar diagram of $\alpha$ and $R$, excluding intersections at vertices $v_{1}, v_{2}, \cdots, v_{n}$. Now, the \emph{intersection number} $\underline{i}(\alpha, R)$ is defined as 
\[
	\underline{i}(\alpha, R) := \mathrm{min}\;\{i(\alpha', R)\;|\; \alpha' \mbox{ is regular isotopic to } \alpha\}.
\]

We show the statement by induction on $\underline{i}(\alpha, R)$. If $\underline{i}(\alpha, R) = 0$, then $\alpha$ is regular isotopic to a reduced arc in $P$. Thus, $\alpha$ is regular isotopic to one of the $\beta_{ij}$'s. 

Suppose that $\underline{i}(\alpha, R) > 0$. In this case, $\alpha$ intersects one of $R_{i}$'s. Otherwise, $\alpha$ lies in a simply connected region $\RR^{2} \setminus \bigcup_{i=1}^{n}R_{i}$ and the entire curve $\alpha$ can be contracted to the interior of $P$. Thus, $\underline{i}(\alpha, R)$ must be zero. We may assume that $\alpha$ intersects $R_{i}$. Take the closest strand of $\alpha$ to $v_{i}$, and apply the Puncture-Skein relation at $v_{i}$. Then $\alpha = q^{\frac{1}{2}}v_{i}\gamma_{1}\gamma_{2} - q\delta$ as in Figure \ref{fig:intnumreduction}. Now $\underline{i}(\gamma_{1}, R), \underline{i}(\gamma_{2}, R), \underline{i}(\delta, R)$ are all strictly smaller than $\underline{i}(\alpha, R)$. By our induction hypothesis, we obtain the desired result.
\end{proof}

\begin{figure}[!ht]
$q^{-\frac{1}{2}}$\begin{minipage}{0.8in}\includegraphics[width=\textwidth]{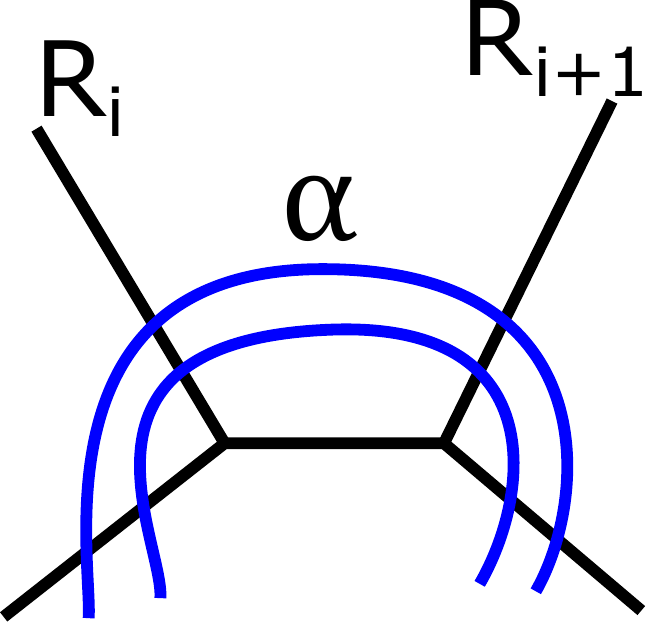}\end{minipage} $=$ \;$v_{i}$ \begin{minipage}{0.8in}\includegraphics[width=\textwidth]{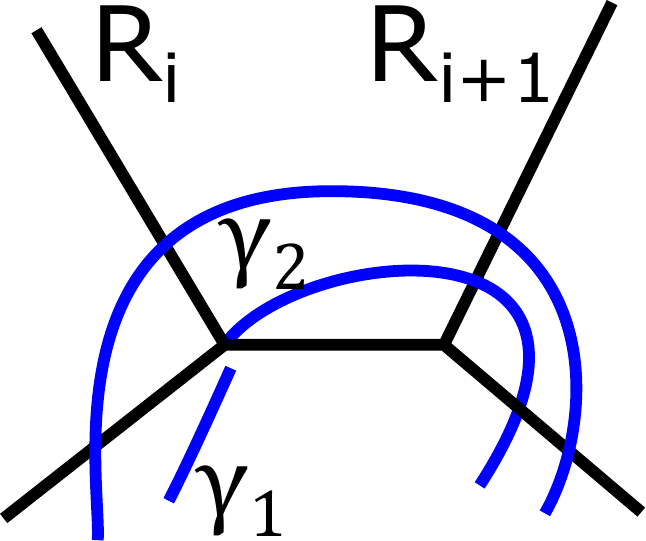}\end{minipage} $-\; q^{\frac{1}{2}}$ \begin{minipage}{0.8in}\includegraphics[width=\textwidth]{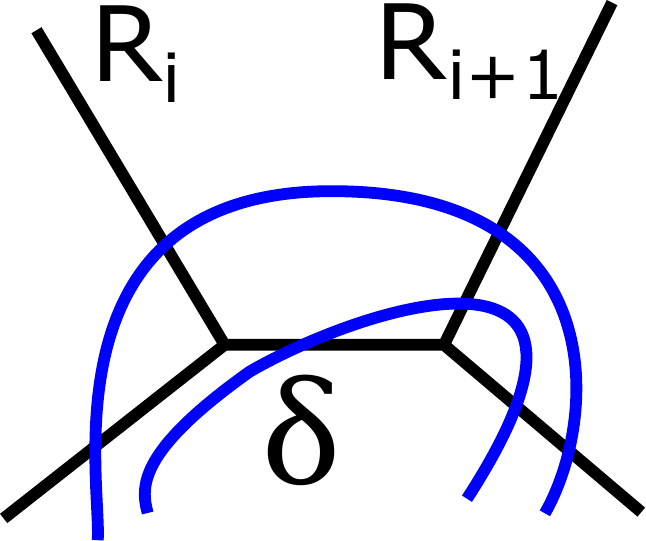}\end{minipage}
\caption{Intersection number reduction}
\label{fig:intnumreduction}
\end{figure}


\section{Relations}\label{sec:relations}

In this section, we present several geometric relations among curve classes on $\RR^{2}_{n}$ and $\Sigma_{0, n}$. The first two types of relations are valid for $\cA_{q}(\Sigma_{0, n})$ and $\cA_{q}(\RR^{2}_{n})$. The remaining relations are valid only for $\cA_{q}(\Sigma_{0, n})$.

\begin{definition}[Ptolemy relations]\label{def:Ptolemy}
Take a $4$-subset $I = \{i, j, k, \ell\} \subset [n]$ and assume that the four elements are listed in clockwise cyclic order. The \emph{Ptolemy relation} for $I$ is 
\begin{equation}\label{eqn:Ptolemy}
	\beta_{ik}\beta_{j\ell} = q\beta_{i\ell}\beta_{jk} + q^{-1}\beta_{ij}\beta_{k\ell}.
\end{equation}
\end{definition}

Note that this is a special case of the Skein relation in Definition \ref{def:skeinalgebra}.

\begin{definition}[Quantum commutation relations]\label{def:quantumcommutation}
For any $4$-subset $I = \{i, j, k, \ell\} \subset [n]$ that is listed in clockwise cyclic order, the \emph{first Quantum commutation relation} for $I$ is
\begin{equation}\label{eqn:quantcomm1}
	\beta_{ij}\beta_{k\ell} = \beta_{k\ell}\beta_{ij}.
\end{equation}
For any $3$-subset $I = \{i, j, k\} \subset [n]$ in clockwise cyclic order, the \emph{second Quantum commutation relation} for $I$ is 
\begin{equation}\label{eqn:quantcomm2}
	\beta_{jk}\beta_{ij} = q\beta_{ij}\beta_{jk} + (q^{-\frac{1}{2}}-q^{\frac{3}{2}})v_{j}^{-1}\beta_{ik}.
\end{equation}
\end{definition}

The second Quantum commutation relation follows from the comparison of the $\beta_{ij}\beta_{jk}$ and $\beta_{jk}\beta_{ij}$ after applying the Puncture-Skein relation at $v_{j}$. 

The below relations are valid only for $\cA_{q}(\Sigma_{0, n})$. 

\begin{definition}[$\gamma$-relations]\label{def:gammarelations}

Fix $i, j \in [n]$. Let $\gamma_{ij}^{+}$ (resp. $\gamma_{ij}^{-}$) be the reduced arc class outside $P$ in Figure \ref{fig:PDR}, starting from $v_{i}$, moving clockwise (resp. counterclockwise) and ending at $v_{j}$ (Figure \ref{fig:gammarelations}). It is clear that if $i \ne j$, $\gamma_{ij}^{+} = \gamma_{ji}^{-}$. When $i = j$, $\gamma_{ii}^{+}$ is the arc moving around $P$ while $\gamma_{ii}^{-} = \omega_{i}$ in Example \ref{ex:skeinexample}. The \emph{$\gamma$-relation} is  
\begin{equation}\label{eqn:gammaijgammaji}
	\gamma_{ij}^{+} = \gamma_{ij}^{-}. 
\end{equation}
\end{definition}

One can see that on $\Sigma_{0, n}$, $\gamma_{ij}^{+}$ and $\gamma_{ij}^{-}$ are regular isotopic. Note that $\gamma_{ii}^{-} = \omega_{i} = (q^{\frac{1}{2}}- q^{\frac{5}{2}})v_{i}^{-1} \in R_{q, n}$. If $j = i+1$, then $\gamma_{i,i+1}^{+} = \beta_{i,i+1}$.

\begin{figure}[!ht]
\begin{minipage}{0.8in}\includegraphics[width=\textwidth]{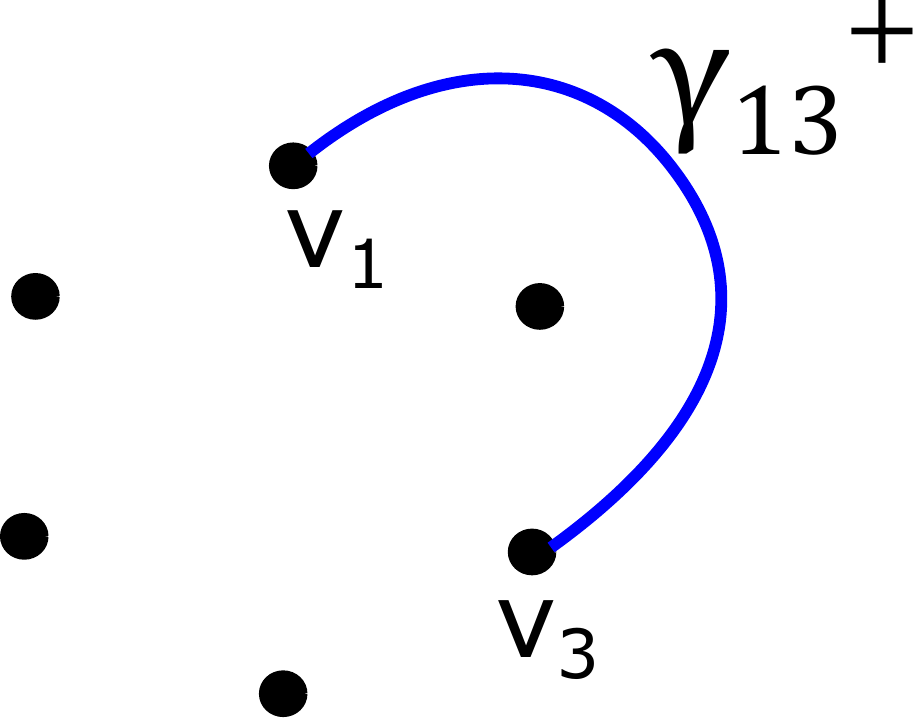}\end{minipage} $=$ \begin{minipage}{0.8in}\includegraphics[width=\textwidth]{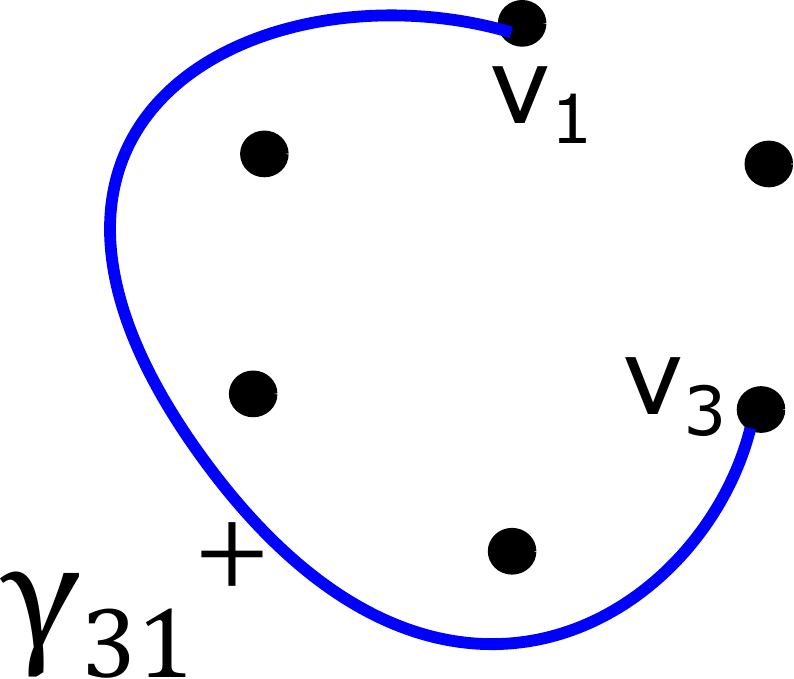}\end{minipage} \quad\quad \begin{minipage}{1in}\includegraphics[width=\textwidth]{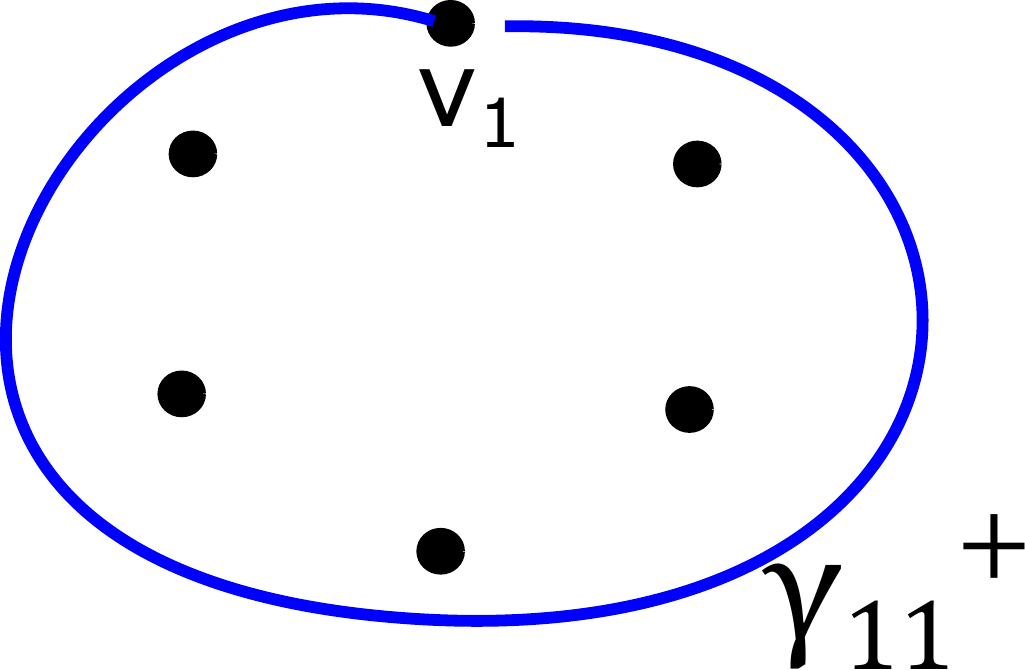}\end{minipage} $=$ \begin{minipage}{0.6in}\includegraphics[width=\textwidth]{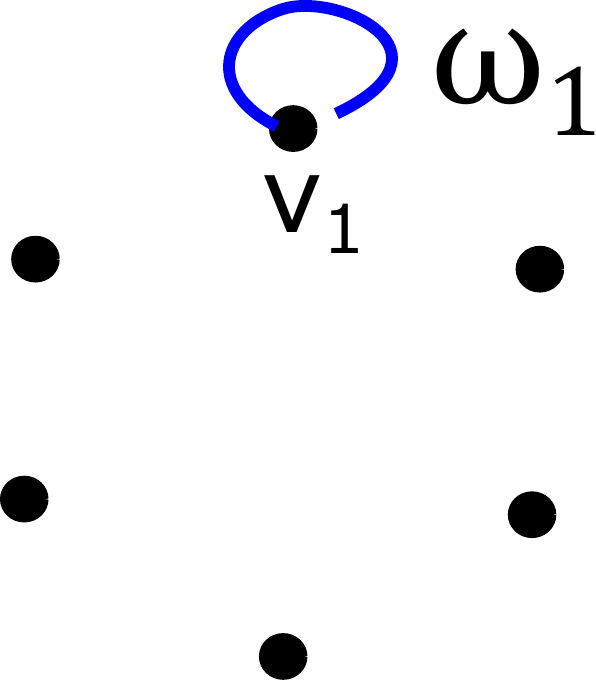}\end{minipage}
\caption{The $\gamma$-relations}
\label{fig:gammarelations}
\end{figure}

\begin{definition}[Big circle relation]\label{def:bigcircleistrivial}
Let $\delta$ be the reduced loop class of the circle $D$ in Figure \ref{fig:PDR}. The \emph{Big circle relation} is 
\begin{equation}\label{def:bigcircle}
	\delta = -q^{2} - q^{-2}.
\end{equation}
\end{definition}

The Big circle relation is a special case of the Framing relation in Definition \ref{def:skeinalgebra}.

By using skein relations, one may find explicit expressions for the relations above in terms of $\beta_{ij}$ classes. 

We introduce some new notations. Let $C_{n}$ be the cyclic graph with $n$ vertices $1, 2, \cdots, n$ arranged clockwise. For any two elements $i, j \in [n]$, let $(i,j)$ be the set of vertices in the path starting from $i+1$, moving clockwise, and ending at $j-1$. Note that $(i,i) = [n] \setminus \{i\} \ne \emptyset$. We may give a total order on $(i, j)$ as $i+1$ is the smallest and $j-1$ is the largest element. Then for any $I \subset (i, j)$, we have an induced order. 

\begin{lemma}\label{lem:gammarelationdescription}
For any $I \subset (i, j)$, let $i_{k}$ be the $k$-th element of $I$. Set $i_{0} = i$ and $i_{|I|+1} = j$. Then in $\cA_{q}(\Sigma_{0, n})$ or $\cA_{q}(\RR^{2}_{n})$, 
\begin{equation}\label{eqn:gammaclass}
	\gamma_{ij}^{+} = \gamma_{ji}^{-} = \sum_{I \subset (i, j)} (-1)^{|(i,j)\setminus I|}q^{|(i,j)| - \frac{|I|}{2}}\left(\prod_{k=1}^{|I|} v_{i_k}\right)\left(\prod_{k=0}^{|I|}\beta_{i_k i_{k+1}}\right).
\end{equation}
For $\gamma_{ii}^{+}$, we set $\beta_{ii} = \overline{\omega_{i}}$ (Example \ref{ex:conjugate}). 
\end{lemma}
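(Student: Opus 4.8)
The plan is to prove formula (\ref{eqn:gammaclass}) by induction on the number $d := |(i,j)|$ of punctures lying strictly between $v_{i}$ and $v_{j}$ in the clockwise order (allowing the configuration of punctures on $C$ to vary), by resolving the arc $\gamma_{ij}^{+}$ with the Puncture-Skein relation at the first intermediate puncture $v_{i+1}$. Concretely, just as in the proof of Proposition \ref{prop:generators}, applying the Puncture-Skein relation of Definition \ref{def:skeinalgebra} to the strand of $\gamma_{ij}^{+}$ running just outside $v_{i+1}$ produces the recursion
\[
	\gamma_{ij}^{+} = q^{\frac{1}{2}} v_{i+1}\,\beta_{i,i+1}\,\gamma_{i+1,j}^{+} - q\,\sigma,
\]
where the first term comes from the resolution in which the strand breaks and attaches to $v_{i+1}$ (splitting $\gamma_{ij}^{+}$ into the side $\beta_{i,i+1}$ and the arc $\gamma_{i+1,j}^{+}$ that now starts at $v_{i+1}$), and $\sigma$ is the resolution in which the strand is pushed to the interior side of $v_{i+1}$, i.e.\ the $\gamma^{+}$-arc from $v_{i}$ to $v_{j}$ that skips $v_{i+1}$. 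The base case $d=0$ is $j=i+1$, where $\gamma_{i,i+1}^{+}=\beta_{i,i+1}$ and the right-hand side of (\ref{eqn:gammaclass}) has only the term $I=\emptyset$, giving $\beta_{i,i+1}$.

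First I would pin down the two arcs on the right. The arc $\gamma_{i+1,j}^{+}$ has exactly $d-1$ intermediate punctures, so the induction hypothesis applies to it directly. The skip arc $\sigma$ runs on the interior side of $v_{i+1}$ and can be isotoped away from $v_{i+1}$ entirely; hence it is regular isotopic to the clockwise $\gamma^{+}$-arc from $v_{i}$ to $v_{j}$ in the configuration in which $v_{i+1}$ has been deleted from the circle $C$. This reduced configuration again has $d-1$ intermediate punctures, so the induction hypothesis applies to $\sigma$ as well, expressing it as the sum (\ref{eqn:gammaclass}) ranging over subsets $I\subseteq (i,j)\setminus\{i+1\}$. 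Note that every chord $\beta_{ab}$ appearing for the reduced configuration is literally the same diagonal of $P$ as in the original one, so no reinterpretation of the generators is needed.

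Then the proof reduces to a purely algebraic check that the claimed closed form $F(i,j)$ (the right-hand side of (\ref{eqn:gammaclass})) satisfies the same recursion. I would split the sum defining $F(i,j)$ according to whether or not $i+1\in I$. The subsets containing $i+1$ have $i_{1}=i+1$, so they contribute the factor $q^{\frac{1}{2}}v_{i+1}\beta_{i,i+1}$ and, after reindexing by $I'=I\setminus\{i+1\}\subseteq (i+1,j)$, reassemble exactly into $q^{\frac{1}{2}}v_{i+1}\beta_{i,i+1}F(i+1,j)$; here one checks the bookkeeping $|(i,j)|-\tfrac{|I|}{2}=\tfrac{1}{2}+|(i+1,j)|-\tfrac{|I'|}{2}$ and that the sign is unchanged. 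The subsets not containing $i+1$ are exactly the subsets of $(i,j)\setminus\{i+1\}$; for these, removing $v_{i+1}$ from the ambient set lowers $|(i,j)\setminus I|$ by one (producing a factor $-1$) and $|(i,j)|$ by one (producing a factor $q$), so they reassemble into $-q\,F'(i,j)$, where $F'$ is the closed form for the reduced configuration. This matches the geometric recursion term by term.

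The main obstacle I expect is the geometric identification of the skip term $\sigma$: one must verify, at the level of \emph{regular} isotopy, that pushing the strand to the interior side of $v_{i+1}$ yields precisely the $\gamma^{+}$-arc for the puncture-deleted configuration without introducing spurious self-crossings or a change of framing (a stray Reidemeister-I move), and that the crossing data produced by the clockwise convention is exactly the one appearing in the Puncture-Skein relation (so that the coefficients are $q^{\frac{1}{2}}v_{i+1}$ and $-q$ rather than their conjugates). A secondary point requiring care is the diagonal case $i=j$, where $d=n-1$: the same induction applies, but one must track that the closing chord in the $I=\emptyset$ term is the self-arc, for which the stated convention $\beta_{ii}=\overline{\omega_{i}}$ (Example \ref{ex:conjugate}) keeps the recursion and the algebraic splitting consistent.
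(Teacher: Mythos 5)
Your proposal is correct and follows essentially the same route as the paper: the paper also resolves $\gamma_{ij}^{+}$ via the Puncture-Skein relation at the first enclosed puncture, obtaining the recursion $\eta_{ikj} = q^{\frac{1}{2}}v_{k}\beta_{ik}\eta_{k,k+1,j} - q\,\eta_{i,k+1,j}$ (with $\gamma_{ij}^{+} = \eta_{i,i+1,j}$ and terminal value $\eta_{ijj}=\beta_{ij}$), whose unfolding is exactly your splitting of the sum according to whether $i+1\in I$. The only difference is bookkeeping: the paper names your skip arc $\sigma$ as the auxiliary class $\eta_{i,i+2,j}$ (the arc exiting $P$ between $v_{i+1}$ and $v_{i+2}$) inside the \emph{fixed} configuration, which sidesteps the re-identification of $\sigma$ as a $\gamma^{+}$-arc of a puncture-deleted configuration --- precisely the step you flagged as your main obstacle.
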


The product $\mu_{I} := \prod_{k=0}^{|I|}\beta_{i_{k}i_{k+1}}$ is the ordered product of all edges on the clockwise path from $i$ to $j$ with intermediate vertex set $I$. The right hand side has a clear combinatorial meaning if we set $q = 1$ and ignore all vertex classes. It is an alternating sum of all clockwise paths from $v_{i}$ to $v_{j}$. 

\begin{proof}
For any $k \in (i, j)$, let $\eta_{ikj}$ be the reduced arc class starting at $v_{i}$, moving to the inside of $P$, going outside of $P$ between $v_{k-1}$ and $v_{k}$, moving clockwise, and arriving at $v_{j}$ (see Figure \ref{fig:etarelation}). Note that $\eta_{i,i+1,j} = \gamma_{ij}^{+}$. We set $\eta_{ijj} = \beta_{ij}$, and $\eta_{iii} = \beta_{ii} = \overline{\omega_{i}}$. 

\begin{figure}[!ht]
$v_{3}\;$\begin{minipage}{0.8in}\includegraphics[width=\textwidth]{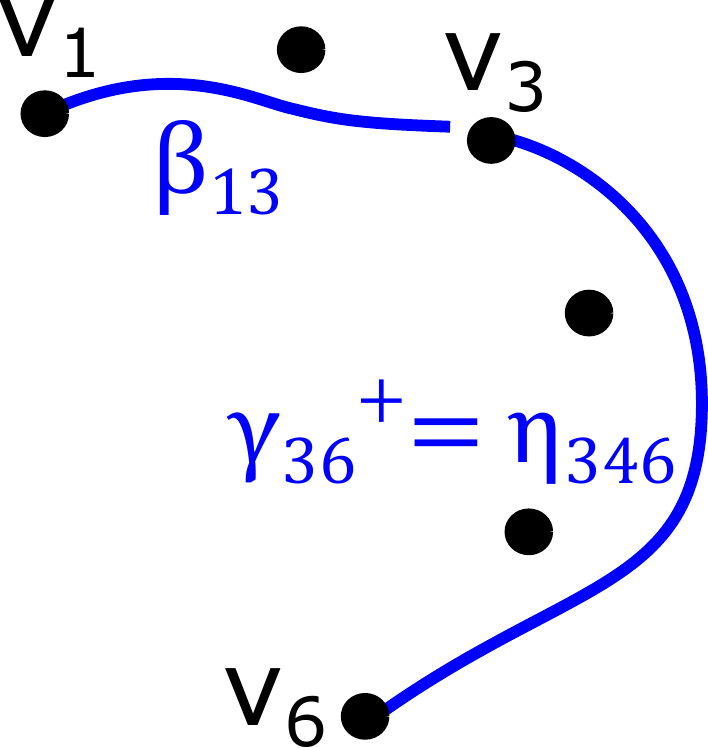}\end{minipage} $\;=\; q^{-\frac{1}{2}}\;$\begin{minipage}{0.8in}\includegraphics[width=\textwidth]{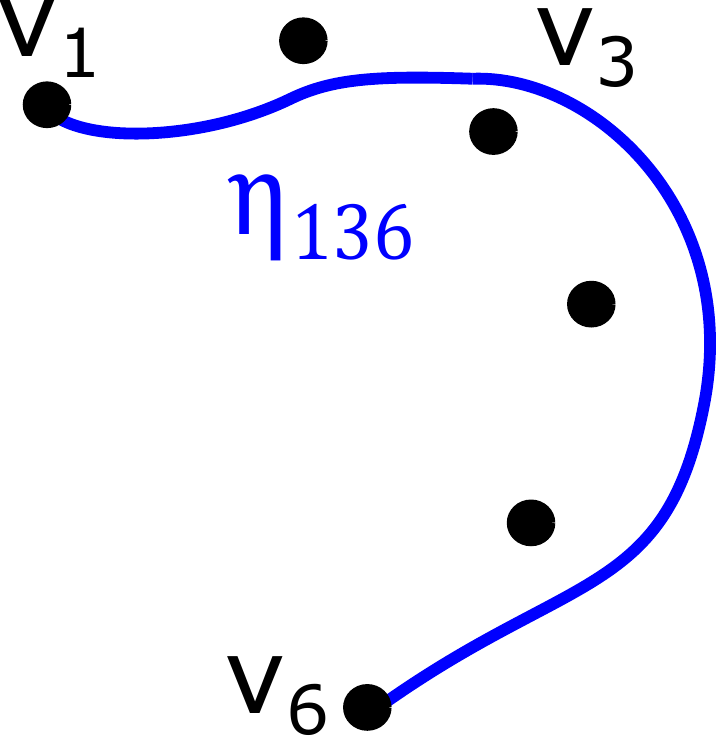}\end{minipage} $\;+ \; q^{\frac{1}{2}}\;$\begin{minipage}{0.8in}\includegraphics[width=\textwidth]{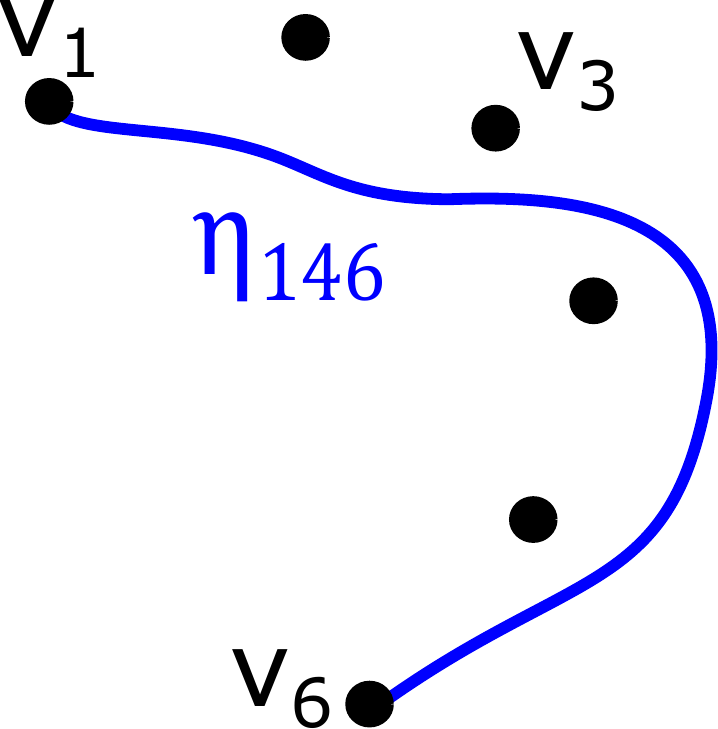}\end{minipage}
\caption{Recursive formula for $\gamma$-classes and $\eta$-classes}
\label{fig:etarelation}
\end{figure}

The Puncture-Skein relation at $v_{k}$ provides a recursive formula
\[
	\eta_{ikj} = q^{\frac{1}{2}}v_{k}\beta_{ik}\eta_{k,k+1,j} - q\eta_{i,k+1,j}
\]
(Figure \ref{fig:etarelation}). By applying the formula to $\gamma_{ij}^{+} = \eta_{i,i+1,j}$ and using $\eta_{ijj} = \beta_{ij}$, we obtain the desired result. 
\end{proof}

\begin{lemma}\label{lem:bigcirclerelation}
For any $I \subset [n]$, we denote the $k$-th element of $I$ by $i_{k}$ and set $i_{|I|+1} = i_{1}$. Then in $\cA_{q}(\Sigma_{0, n})$ or $\cA_{q}(\RR^{2}_{n})$, 
\begin{equation}\label{eqn:deltaformula}
	\delta = (-1)^{n-1}(q^{n-2}+(q^{-1})^{n-2}) + \sum_{\stackrel{I \subset [n]}{|I| \ge 2}} (-1)^{n - |I|}q^{n-2i_{1}+1 - \frac{|I|}{2}}\left(\prod_{i \in I}v_{i}\right)\left(\prod_{k=1}^{|I|}\beta_{i_{k}i_{k+1}}\right).
\end{equation}
\end{lemma}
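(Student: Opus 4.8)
The plan is to compute $\delta$ by resolving the closed loop $D$ through repeated application of the Puncture-Skein relation, in close parallel with the proof of Lemma \ref{lem:gammarelationdescription}. First I would isotope $D$ so that it meets the ray system $R = P \cup \bigcup_{i} R_i$ from Proposition \ref{prop:generators} in the cleanest way, crossing each ray $R_i$ exactly once, and then ``pull'' a strand of $D$ onto the punctures one at a time, as in Figures \ref{fig:intnumreduction} and \ref{fig:etarelation}. Each such Puncture-Skein move either records a \emph{visit} to a puncture $v_k$ --- contributing a factor $v_k$, a half-power of $q$, and a $\beta$-edge into the running path --- or \emph{skips} it, contributing a sign and an integer power of $q$. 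Summing over all branchings produces a sum indexed by the subset $I \subset [n]$ of visited punctures, which is exactly the shape of the right-hand side of \eqref{eqn:deltaformula}.

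The essential new feature, compared with the arc computation of Lemma \ref{lem:gammarelationdescription}, is that $D$ is a \emph{closed} loop and hence carries no preferred basepoint, whereas the recursion needs a place to start and stop. I would break this cyclic symmetry by anchoring the resolution at the visited puncture of smallest index, $i_1 = \min I$, so that each cyclic path is produced exactly once (a cyclic path through $I$ is determined by $I$ together with its induced cyclic order, and anchoring at $\min I$ removes the $|I|$-fold redundancy). Running the visit-or-skip branching once around $P$ from $v_{i_1}$ and back to $v_{i_1}$ then mirrors the $\eta$-recursion $\eta_{ikj} = q^{\frac{1}{2}}v_k\beta_{ik}\eta_{k,k+1,j} - q\,\eta_{i,k+1,j}$, except that the path now closes up to its starting puncture. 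This is the origin both of the cyclic product $\beta_{i_1 i_2}\cdots\beta_{i_{|I|} i_1}$ and of the asymmetric exponent $q^{n - 2i_1 + 1 - |I|/2}$, the dependence on $i_1 = \min I$ being the residue of where the sweep was cut.

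Finally I would handle the degenerate configurations. The empty configuration, in which $D$ visits no puncture, is regular isotopic to the trivial big circle and is evaluated by the Framing relation of Definition \ref{def:skeinalgebra} to $-q^2 - q^{-2}$; this is one among several degenerate contributions, since skipping every puncture during the sweep accumulates its own sign and $q$-power. The configurations visiting a single puncture collapse to scalars using $\beta_{ii} = \overline{\omega_i}$ from Example \ref{ex:conjugate} and the Puncture-Framing relation. Collecting all of these $|I| \le 1$ terms, together with the powers of $q$ accumulated while sweeping $D$ once around $P$, should assemble into the single constant $(-1)^{n-1}\bigl(q^{n-2} + q^{-(n-2)}\bigr)$, leaving only the $|I| \ge 2$ terms genuinely curve-valued and matching \eqref{eqn:deltaformula}.

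The main obstacle I anticipate is the bookkeeping of signs and $q$-exponents across the full sweep, compounded by the basepoint choice: one must verify that anchoring at $i_1 = \min I$ counts each cyclic path exactly once, that the half-integer powers emitted at successive Puncture-Skein resolutions telescope precisely into $n - 2i_1 + 1 - |I|/2$, and that the many degenerate (contractible and once-punctured) pieces combine into the single closed-form constant rather than leaving residual scalar terms. I expect these steps to be delicate but routine; everything else is forced by the recursion already established for the $\eta$- and $\gamma$-classes.
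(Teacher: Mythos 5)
Your route is genuinely different from the paper's: you resolve the closed loop by a sequential visit-or-skip sweep across the punctures and then quote the arc expansion of Lemma \ref{lem:gammarelationdescription}, whereas the paper introduces two auxiliary families --- the sub-loops $\delta_I$ around the convex hulls of $\{v_i\}_{i\in I}$ and the cyclically stacked ``staircase'' classes $\nu_I$ --- proves $\nu_I=\sum_{J\subset I}q^{|I|/2-|J|}\delta_J$ by resolving \emph{all} punctures simultaneously, inverts this by M\"obius inversion on the Boolean lattice, and only then converts the $\nu_J$ into the ordered monomials $\mu_I=\prod_{k}\beta_{i_ki_{k+1}}$ via \eqref{eqn:nubymu}. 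At $q=1$ your sweep does reproduce the formula (the singleton terms vanish since $\omega_i=0$, and the empty term gives $(-1)^{n-1}\cdot 2$), so classically your plan is fine.

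The genuine gap is quantum, and it is exactly the point your proposal dismisses as ``delicate but routine'': resolving a \emph{closed} loop produces cyclically stacked configurations (the paper's $\nu_I$, which have no lowest strand), and these are \emph{not} equal to the ordered products $\mu_I$ appearing in \eqref{eqn:deltaformula}; they differ by a power of $q$ \emph{and} by correction terms supported on smaller subsets, as in \eqref{eqn:nubymu}. Your bookkeeping tracks only the visit/skip scalars and then cites the $\eta$-recursion: skipping $v_1,\dots,v_{i_1-1}$ gives $(-q)^{i_1-1}$, the visit at $v_{i_1}$ gives $q^{1/2}v_{i_1}$, and Lemma \ref{lem:gammarelationdescription} on the sub-polygon $\{i_1,\dots,n\}$ gives $(-1)^{(n-i_1)-(|I|-1)}q^{(n-i_1)-(|I|-1)/2}$; the product is $(-1)^{n-|I|}q^{\,n-|I|/2}$, in which the $i_1$-dependence \emph{cancels}. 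This disagrees with the stated coefficient $(-1)^{n-|I|}q^{\,n-2i_1+1-|I|/2}$ by the factor $q^{\,2i_1-1}$, already at $i_1=1$. In the paper's argument the missing factor arises because the monomial $\mu_I$ also receives contributions from every larger subset $J=J'\sqcup I$ with $J'\subset\{1,\dots,i_1-1\}$, through the correction terms of \eqref{eqn:nubymu} combined with the inversion \eqref{eqn:inversion}; these sum to $\sum_{t=0}^{i_1-1}\binom{i_1-1}{t}(-1)^tq^{-t/2}\bigl(q^{1/2}-q^{-3/2}\bigr)^t=q^{-2(i_1-1)}$, which is precisely the $i_1$-dependence of the exponent. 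So attributing that dependence to combinatorial anchoring at $\min I$ (``the residue of where the sweep was cut'') is not the right mechanism --- anchoring is invisible at $q=1$, and the dependence is created by over/under stacking data at the pinch vertex and the visited vertices. To repair your argument you would have to track which strand lies over which at each resolution and carry the resulting correction terms, which in effect means reintroducing the paper's classes $\nu_I$ and their conversion recursion; without that, the sweep's coefficients simply do not telescope to \eqref{eqn:deltaformula}.
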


Note that the product $\mu_{I} := \prod_{k=1}^{|I|}\beta_{i_{k}i_{k+1}}$ is the ordered product of all edges of the convex polygon with the vertex set $I$. The non-constant part of the right hand side is, after setting $q = 1$ and ignoring vertices, the alternating sum of all convex polygons in $P$, including bigons. 

\begin{proof}
Here we leave an outline of the proof. For any $I \subset [n]$, let $\nu_{I}$ be the product of $\prod_{i\in I}v_{i}$ and the configuration of curves $\{\beta_{i_{k}i_{k+1}}\}_{1 \le k \le |I|}$ that is like `infinite stairs' in Figure \ref{fig:muandnu}. Note that $\nu_{I} \ne \mu_{I}$ because in $\nu_{I}$, there is no lowest component. We set $\mu_{\{i\}} = v_{i}\overline{\omega_{i}}$, $\nu_{\{i\}} = v_{i}\omega_{i}$, and $\mu_{\emptyset} = \nu_{\emptyset} = -q^{2} - q^{-2}$.

\begin{figure}[!ht]
\begin{minipage}{0.8in}\includegraphics[width=\textwidth]{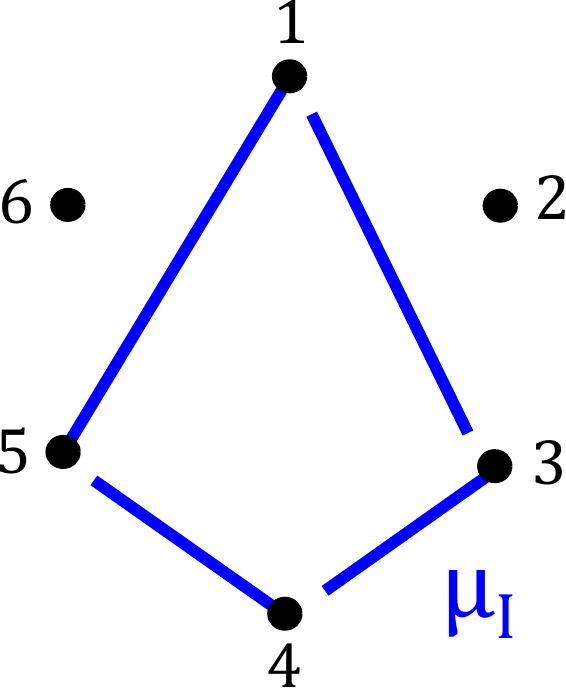}\end{minipage} \quad \quad \begin{minipage}{0.8in}\includegraphics[width=\textwidth]{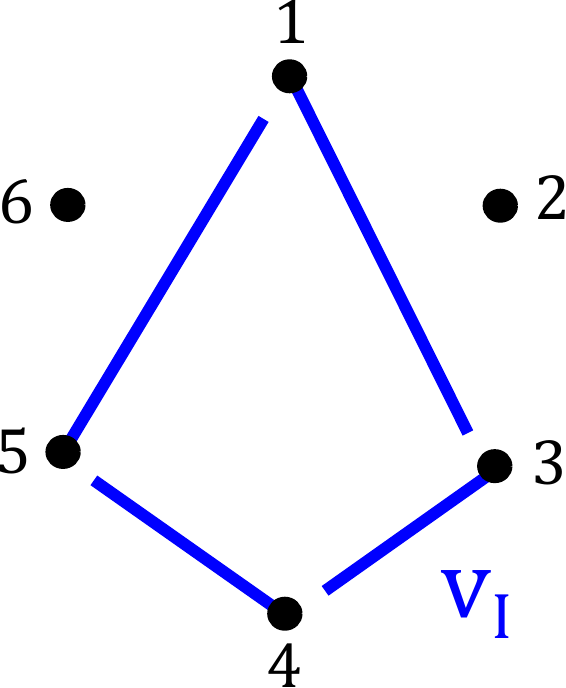}\end{minipage}
\caption{Example of $\mu_{I}$ and $\nu_{I}$ for $I = \{1, 3, 4, 5\}$.}
\label{fig:muandnu}
\end{figure}

For any $k \ge 1$, let $I_{\ge k} := \{i_{k}, i_{k+1}, \cdots, i_{|I|}\} \subset I$. Applying the Puncture-Skein relation, we get a recursive formula $\nu_{I} = q^{-1}\mu_{I} + (q^{\frac{1}{2}} - q^{-\frac{3}{2}})\mu_{I_{\ge 2}}$. Solving the recursive equation, we have 
\begin{equation}\label{eqn:nubymu}
	\nu_{I} = (q^{\frac{1}{2}} - q^{-\frac{3}{2}})^{|I|-1}\omega_{i_{|I|}}+\sum_{j=1}^{|I|-1}q^{-1}(q^{\frac{1}{2}}-q^{-\frac{3}{2}})^{j-1}\mu_{I_{\ge j}}.
\end{equation}

On the other hand, let $\delta_{I}$ be the loop class properly containing the convex hull generated by $\{v_{i}\}_{i \in I}$. So, $\delta = \delta_{[n]}$. Applying the Puncture-Skein relation at all vertices, we obtain $\nu_{I} = \sum_{J \subset I}q^{\frac{|I|}{2}-|J|}\delta_{J}$. Using the M\"obius inversion formula (\cite[Section 3.7]{Stanley12}), we get
\begin{equation}\label{eqn:inversion}
	\delta_{I} = \sum_{J \subset I}(-1)^{|I| - |J|}q^{\frac{|J|}{2} - |I|}\nu_{J}.
\end{equation}
Combining \eqref{eqn:nubymu} and \eqref{eqn:inversion}, we may describe $\delta = \delta_{[n]}$ as a linear combination of $\mu_{J}$'s and $\omega_{i}$'s. By calculating each coefficient, we obtain the formula. 
\end{proof}

\begin{remark}\label{rem:oppositewaterdrop}
A careful reader may wonder if we require an extra relation $\overline{\gamma_{ii}^{+}} = \overline{\gamma_{ii}^{-}}$. This extra relation follows from $\gamma_{ii}^{+} = \gamma_{ii}^{-}$ by applying the conjugation map, which is an anti-involution. It can be also obtained by combining the $\gamma$-relations, the Big circle relation, the Puncture-Skein relation, and the computation in Example \ref{ex:conjugate}, as follows:
\[
	\overline{\gamma_{ii}^{+}} = q\gamma_{ii}^{+} + (q^{-\frac{1}{2}} - q^{\frac{3}{2}})v_{i}^{-1}\delta = q\omega_{i} + (q^{-\frac{1}{2}} - q^{\frac{3}{2}})v_{i}^{-1}(-q^{2}-q^{-2}) = \overline{\omega_{i}} = \overline{\gamma_{ii}^{-}}.
\]
\end{remark}


\section{Presentation of $\cA_{q}(\RR^{2}_{n})$}\label{sec:presentationR2}

From now on, we assume that $n \ge 3$. In this section, we find a presentation of $\cA_{q}(\RR^{2}_{n})$. This computation is not only interesting but is also a crucial step for the calculation of $\cA_{q}(\Sigma_{0, n})$ because of the existence of functorial morphisms. 

Since $\RR^{2} \cong S^{2} \setminus \{p\}$, there is a natural inclusion map $\iota : \RR^{2}_{n} \to \Sigma_{0, n}$ which maps the $i$-th vertex to the $i$-th vertex. It induces an $R_{q, n}$-algebra homomorphism
\begin{equation}\label{eqn:imap}
	\iota_{\#} : \cA_{q}(\RR^{2}_{n}) \to \cA_{q}(\Sigma_{0, n}), 
\end{equation}
which is specialized to $\iota_{\#}: \cC(\RR^{2}_{n}) \to \cC(\Sigma_{0, n})$. To avoid any unnecessary complication of notations, we will use the same symbol $\iota_{\#}$ for the classical case and for the map after the coefficient extension. 

We may also regard $p \in S^{2}$ as the $(n+1)$-st puncture on  $S^{2}$. There is another morphism $j : \RR^{2}_{n} \to \Sigma_{0, n+1}$ that maps the $i$-th vertex to the $i$-th vertex for $1 \le i \le n$. Then we obtain an $R_{q, n}$-algebra homomorphism 
\begin{equation}\label{eqn:jmap}
	j_{\#} : \cA_{q}(\RR^{2}_{n}) \to \cA_{q}(\Sigma_{0, n+1}).
\end{equation}

\begin{definition}
Let $S_{q, n}$ be the non-commutative $R_{q, n}$-algebra generated by $\beta_{ij}$ classes modulo the ideal generated by relations in Definitions \ref{def:Ptolemy} and \ref{def:quantumcommutation}. 
\end{definition}

Consider the `classical limit' $S_{q, n}/(q^{\frac{1}{2}}-1)$ of $S_{q, n}$. Then the quantum commutation relations specialize to ordinary commutation relations, and one can check that 
\begin{equation}\label{eqn:classicallimit}
	S_{q, n}/(q^{\frac{1}{2}}-1) \cong R_{n} \otimes_{\ZZ}S, 
\end{equation}
where $R_{n} = \ZZ[v_{1}^{\pm}, v_{2}^{\pm}, \cdots, v_{n}^{\pm}]$ and $S$ is a commutative algebra with the presentation 
\begin{equation}\label{eqn:defS}
	S := \ZZ[\beta_{ij}]/(\beta_{ik}\beta_{j\ell} - \beta_{ij}\beta_{k\ell} - \beta_{i\ell}\beta_{jk})
\end{equation}
where $\{i, j, k, \ell\} \subset [n]$ is cyclic. 

\begin{remark}\label{rem:ringS}
The commutative ring $S$ in \eqref{eqn:defS} has appeared in many different contexts.
\begin{enumerate}
\item The algebra $\CC \otimes_{\ZZ} S$ is the homogeneous coordinate ring of the Grassmannian $\mathrm{Gr}(2, n)$ (\cite[Chapter I.5]{GriffithsHarris94}).
\item In classical invariant theory, $S$ is the $\mathrm{SL}_{2}(\ZZ)$-invariant subring of the algebra of the polynomial ring with $2n$ variables. It is also called the \emph{graphical algebra} \cite[Section 2]{MoonSwinarski19}).
\item The algebra $S$ is also a cluster algebra of type $A_{n-3}$ (\cite[Section 12]{FominZelevinsky03}). 
\item The tropicalization of $\CC \otimes_{\ZZ}S$ is the space of phylogenetic trees in computational biology (\cite[Sec 4.3]{MaclaganSturmfels15}). 
\end{enumerate}
In particular, it is well-known that $\CC \otimes_{\ZZ}S$ is an integral domain of dimension $\dim \mathrm{Gr(2,n)} + 1 = 2n - 3$. Thus, $\dim \CC \otimes_{\ZZ} R_{n} \otimes_{\ZZ} S = 3n - 3$. 
\end{remark}

\begin{remark}\label{rem:basis}
We say that a monomial $\prod \beta_{ij}^{m_{ij}}$ is \emph{non-crossing} if no two $\beta_{ij}$ and $\beta_{k\ell}$ with $m_{ij}, m_{k\ell} > 0$ intersect except at one of the endpoints. The ring $S$ is a free $\ZZ$-module with a basis consisting of non-crossing monomials with respect to the $\beta_{ij}$'s (Straightening law, \cite[Corollary 3.1.9]{Sturmfels08}). Since the freeness is preserved by the base ring extension, $R_{n} \otimes_{\ZZ}S$ is a free $R_{n}$-module with the same basis. 

We will extend this freeness result to the quantum setup. Fix any total order on the set $\{\beta_{ij}\}$. Let $B$ be the set of non-crossing monomials with respect to $\beta_{ij}$ in $S_{q, n}$, such that the product is taken in non-decreasing order. By the non-decreasing property, there is no duplication of the commutative version of monomials in $B$. Thus, $B$ can be also understood as a basis of $S$ and $R_{n} \otimes_{\ZZ}S$. In Proposition \ref{prop:quantumbasis}, we will show that $S_{q, n}$ is a free $R_{q, n}$ module with a basis $B$. 
\end{remark}

Since the non-commutative polynomial algebra is a free object in the category of algebras, there is a unique homomorphism 
\[
	f : R_{q, n}\langle \beta_{ij}\rangle  \to \cA_{q}(\RR^{2}_{n})
\]
which maps each $\beta_{ij}$ to $\beta_{ij} \in \cA_{q}(\RR^{2}_{n})$. Note that Ptolemy relations and Quantum commutation relations are special cases of Skein relations and Puncture-Skein relations, respectively. Thus, there is a well-defined quotient homomorphism 
\begin{equation}\label{eqn:barf}
	\bar{f} : S_{q, n} \to \cA_{q}(\RR^{2}_{n}).
\end{equation}
By Proposition \ref{prop:generators}, $\bar{f}$ is surjective. When $q^{\frac{1}{2}} = 1$, this map is specialized to (by abuse of notation, we use the same letter) a surjective homomorphism 
\begin{equation}\label{eqn:barfclassical}
	\bar{f} : R_{n} \otimes_{\ZZ}S \to \cC(\RR^{2}_{n}).
\end{equation}

The main result of this section is the following. 

\begin{theorem}\label{thm:presentationAR2n}
The $R_{q, n}$-algebra homomorphism $\bar{f} : S_{q, n} \to \cA_{q}(\RR^{2}_{n})$ in \eqref{eqn:barf} is an isomorphism.
\end{theorem}

We prove the classical case first. 

\begin{theorem}\label{thm:presentationCR2n}
The $R_{n}$-algebra homomorphism $\bar{f} : R_{n} \otimes_{\ZZ} S \to \cC(\RR^{2}_{n})$ in \eqref{eqn:barfclassical} is an isomorphism.
\end{theorem}

\begin{remark}\label{rem:interpretationofS}
Theorem \ref{thm:presentationCR2n} provides a skein theoretic interpretation of $S$. 
\end{remark}

\begin{proof}[Proof of Theorem \ref{thm:presentationCR2n}]

It is sufficient to show the injectivity of $\bar{f}$. We need the fact that $\dim \CC \otimes_{\ZZ}\cC(\RR^{2}_{n}) = 3n - 3$. This will be shown in Proposition \ref{prop:dimCR2n} below. By assuming it here, we will prove the injectivity. Suppose that $\bar{f}$ is not injective. Then, there is an isomorphism $R_{n}\otimes_{\ZZ}S/\ker \bar{f} \cong \cC(\RR^{2}_{n})$. 

Note that for any integral domain $A$ and an indeterminate $v$, $A[v^{\pm}]$ is also an integral domain. Since $S$ is an integral domain, we can conclude that $R_{n} \otimes_{\ZZ}S \cong S[v_{1}^{\pm}, v_{2}^{\pm}, \cdots, v_{n}^{\pm}]$ is an integral domain too. Now every nonzero element in $\ker \bar{f}$ is not a zero divisor. Thus 
\[
	3n - 3 = \dim \CC \otimes_{\ZZ}R_{n} \otimes_{\ZZ}S > \dim \CC \otimes_{\ZZ}R_{n} \otimes_{\ZZ}S /\ker \bar{f} = \dim \CC \otimes_{\ZZ}\cC(\RR^{2}_{n}) = 3n - 3,
\]
which is a contradiction.
\end{proof}

The next proposition fills the missing step of dimension computation.

\begin{proposition}\label{prop:dimCR2n}
The dimension of $\CC \otimes_{\ZZ}\cC(\RR^{2}_{n})$ is $3n - 3$. 
\end{proposition}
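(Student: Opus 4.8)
The plan is to prove the two inequalities $\dim \CC \otimes_{\ZZ}\cC(\RR^{2}_{n}) \le 3n-3$ and $\dim \CC \otimes_{\ZZ}\cC(\RR^{2}_{n}) \ge 3n-3$ separately. The upper bound is immediate. The classical limit of the map $\bar{f}$ from \eqref{eqn:barf}, base changed to $\CC$, is a surjection $\CC \otimes_{\ZZ}R_{n}\otimes_{\ZZ}S \twoheadrightarrow \CC \otimes_{\ZZ}\cC(\RR^{2}_{n})$ of finitely generated $\CC$-algebras; surjectivity is Proposition \ref{prop:generators}, which does not invoke any dimension count. Since $\dim \CC \otimes_{\ZZ}R_{n}\otimes_{\ZZ}S = 3n-3$ by Remark \ref{rem:ringS}, and a surjection of finitely generated $\CC$-algebras cannot increase dimension, we get $\dim \CC \otimes_{\ZZ}\cC(\RR^{2}_{n}) \le 3n-3$. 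I emphasize that this uses only the surjectivity of $\bar{f}$, never its injectivity, so there is no circularity with Theorem \ref{thm:presentationAR2n}.

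For the lower bound I would exploit the morphism $j_{\#}$ from \eqref{eqn:jmap}, in its classical, $\CC$-base-changed form. Regard $p$ as the $(n+1)$-st puncture, so $\Sigma_{0,n+1}$ has punctures $v_{1},\dots,v_{n},p$; since $n \ge 3$, Theorem \ref{thm:domain} and Proposition \ref{prop:dimension} tell us that $\CC \otimes_{\ZZ}\cC(\Sigma_{0,n+1})$ is a domain whose fraction field has transcendence degree $3(n+1)-6 = 3n-3$, with the edge classes of any triangulation forming a transcendence basis. I would fix the triangulation $T$ obtained by triangulating the polygon $P$, which uses $2n-3$ arcs among $v_{1},\dots,v_{n}$, and then joining $p$ to each $v_{i}$ by a spoke $s_{i}$, giving the remaining $n$ edges for a total of $3n-3$. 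The composite
\[
	\CC \otimes_{\ZZ}\cC(\RR^{2}_{n}) \xrightarrow{\ j_{\#}\ } \CC \otimes_{\ZZ}\cC(\Sigma_{0,n+1}) \hookrightarrow Q\!\left(\CC \otimes_{\ZZ}\cC(\Sigma_{0,n+1})\right)
\]
has image a finitely generated $\CC$-subdomain, which is a quotient of the source; hence $\dim \CC \otimes_{\ZZ}\cC(\RR^{2}_{n}) \ge \mathrm{trdeg}_{\CC} F$, where $F$ is the fraction field of the image. So it suffices to show $\mathrm{trdeg}_{\CC} F = 3n-3$.

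The field $F$ contains $j_{\#}(\beta_{ij}) = \beta_{ij}$ for each of the $2n-3$ interior arcs of $T$; being edge classes of $T$, these are algebraically independent. Since $j_{\#}$ is $R_{n}$-linear, $F$ also contains $v_{1},\dots,v_{n}$. Here I would invoke the explicit form of the localization isomorphism \cite[Lemma 3.4]{MoonWong19} used in the proof of Proposition \ref{prop:dimension}: each puncture weight $v_{i}$ is a Laurent monomial in the edge classes of $T$. Because the spoke $s_{j}$ has $v_{j}$ as its only finite endpoint, $s_{j}$ can appear in the monomial for $v_{i}$ only when $i=j$; equivalently, the exponent matrix $\bigl(\text{exponent of } s_{j} \text{ in } v_{i}\bigr)_{1\le i,j\le n}$ is diagonal. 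Provided its diagonal entries are nonzero, the $v_{i}$ recover the spoke classes $s_{1},\dots,s_{n}$ up to algebraic extension, so modulo the interior arcs the elements $v_{1},\dots,v_{n}$ supply the remaining $n$ members of a transcendence basis. Then $F$ contains a full transcendence basis and $\mathrm{trdeg}_{\CC} F = 3n-3$, which combined with the upper bound gives $\dim \CC \otimes_{\ZZ}\cC(\RR^{2}_{n}) = 3n-3$.

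The main obstacle is precisely the last claim: that each spoke class $s_{i}$ occurs with nonzero exponent in the monomial for $v_{i}$, so that the diagonal exponent matrix is invertible. This is the single point where the detailed geometry of the Roger--Yang puncture weights enters, everything else being formal dimension theory; I expect it to reduce to the fact that $v_{i}$ genuinely depends on the edge data incident to $v_{i}$, which includes the spoke $s_{i}$.
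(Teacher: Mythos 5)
Your upper bound is fine and is exactly the paper's. The lower bound, however, hinges on a claim that is not merely unproven but false: the fraction field $F$ of $M := \mathrm{im}\, j_{\#}$ cannot have transcendence degree $3n-3$. The map $j_{\#}$ has a nontrivial kernel: the big circle $\delta$ around $P$, viewed in $\Sigma_{0,n+1}$, becomes a punctured loop around $v_{n+1}$, so $j_{\#}(\delta) = q + q^{-1} = 2$ classically, while $\delta - 2 \neq 0$ in $\CC\otimes_{\ZZ}\cC(\RR^{2}_{n})$ by the freeness of Proposition \ref{prop:freeness}; the same freeness argument shows $\delta - 2$ is not a zero divisor. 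Hence, by Krull's principal ideal theorem, $\dim M \le \dim \CC\otimes_{\ZZ}\cC(\RR^{2}_{n})/(\delta-2) \le \dim \CC\otimes_{\ZZ}\cC(\RR^{2}_{n}) - 1 \le 3n-4$, so $\mathrm{trdeg}_{\CC} F = \dim M \le 3n-4$, contradicting your claim. The dependence among your $3n-3$ proposed elements is explicit: pulling $\delta = 2$ back through Lemma \ref{lem:bigcirclerelation} at $q=1$ gives a nontrivial polynomial identity in $M$ among the $\beta_{ij}$'s and the $v_{i}$'s; for $n=3$ it reads $v_{1}v_{2}v_{3}\beta_{12}\beta_{23}\beta_{13} = v_{1}v_{2}\beta_{12}^{2} + v_{1}v_{3}\beta_{13}^{2} + v_{2}v_{3}\beta_{23}^{2}$ (and since every $\beta_{ij}$ is, by Ptolemy, a rational function of the $2n-3$ triangulation arcs, this is a genuine relation among your proposed transcendence basis). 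The technical culprit is your reading of \cite[Lemma 3.4]{MoonWong19}: the vertex class $v_{i}$ is \emph{not} a Laurent monomial in the edge classes of a triangulation. Geometrically it corresponds (up to a constant) to the horocycle length at $v_{i}$, which by Penner's formula is a \emph{sum} of Laurent monomials, one per triangle corner at $v_{i}$; so there is no diagonal exponent matrix to invert, and the obstacle you flagged as the ``single remaining point'' is not the actual gap.

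The paper's proof shows what must be done instead: accept that $j_{\#}$ loses exactly one dimension, and then recover it. Concretely, the paper proves $\dim M \ge 3n-4$ by showing $Q(\CC\otimes_{\ZZ}\cC(\Sigma_{0,n+1}))$ is a finite extension of $Q(M)(v_{n+1})$ --- the key identity is $\alpha_{i} = v_{n+1}e_{i}^{2}$, where $\alpha_{i} \in M$ is the arc from $v_{i}$ looping around $v_{n+1}$, which makes each spoke $e_{i}$ algebraic of degree $2$ over $Q(M)(v_{n+1})$; note that $v_{n+1}$ is not in $M$ and must be adjoined by hand, which costs the ``$+1$'' in $\mathrm{trdeg}$. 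The lost dimension is then regained on the other side: since $\delta - 2$ is a non-zero-divisor, the inequality $\dim \CC\otimes_{\ZZ}\cC(\RR^{2}_{n}) \ge \dim M$ is \emph{strict}, giving $3n-4 \le \dim M < \dim \CC\otimes_{\ZZ}\cC(\RR^{2}_{n}) \le 3n-3$ and hence the result. Your proposal is missing both ingredients: the adjunction of $v_{n+1}$ and the non-zero-divisor strict-drop argument; without them, any bound obtained purely from $\mathrm{trdeg}$ of the image of $j_{\#}$ tops out at $3n-4$.
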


\begin{proof}
Since there is a surjective homomorphism $f : \CC \otimes_{\ZZ}R_{n} \otimes_{\ZZ} S  \to \CC \otimes_{\ZZ}\cC(\RR^{2}_{n})$, we know that $\dim \CC \otimes_{\ZZ}\cC(\RR^{2}_{n}) \le \dim \CC \otimes_{\ZZ} R_{n} \otimes_{\ZZ} S = 3n - 3$ by Remark \ref{rem:ringS}. 

Consider the map $j_{\#} : \CC \otimes_{\ZZ}\cC(\RR^{2}_{n}) \to \CC \otimes_{\ZZ}\cC(\Sigma_{0, n+1})$ which comes from $j_{\#}$ in \eqref{eqn:jmap} by the coefficient extension. Let $M$ be the image of $j_{\#}$. The homomorphism $j_{\#}$ is not injective. Indeed, the loop class $\delta$ enclosing the polygon $P$ (the isotopy class of the circle $D$ in Figure \ref{fig:PDR}) is also isotopic to the punctured circle at $v_{n+1}$ on $\Sigma_{0, n+1}$. In other words, $\delta - 2 \in \ker j_{\#}$. Thus, there are well-defined surjective homomorphisms 
\[
	\CC \otimes_{\ZZ}\cC(\RR^{2}_{n}) \to \CC \otimes_{\ZZ}\cC(\RR^{2}_{n})/(\delta - 2) \to M.
\]
Therefore, $\dim \CC \otimes_{\ZZ}\cC(\RR^{2}_{n}) \ge \dim \CC \otimes_{\ZZ}\cC(\RR^{2}_{n})/(\delta - 2) \ge \dim M$. The first inequality is an equality only if $\delta - 2$ is a zero divisor in $\CC \otimes_{\ZZ}\cC(\RR^{2}_{n})$. 

Suppose that $(\delta - 2)h = 0$ for some nonzero $h \in \CC \otimes_{\ZZ}\cC(\RR^{2}_{n})$. By Proposition \ref{prop:freeness}, we may represent $h$ uniquely as a linear combination of reduced multicurves. Furthermore, we may find representatives of those reduced multicurves that are contained in $D$. So all of them are disjoint from $\delta$. Thus, $0 = (\delta - 2)h$ is a nontrivial linear combination of reduced multicurves, which violates the freeness of $\cC(\RR^{2}_{n})$ (Proposition \ref{prop:freeness}). Therefore, $\delta - 2$ is not a zero divisor in $\CC \otimes_{\ZZ}\cC(\RR^{2}_{n})$. Thus, $\dim \CC \otimes_{\ZZ}\cC(\RR^{2}_{n}) > \dim M$. 

Since $\CC \otimes_{\ZZ}\cC(\Sigma_{0, n+1})$ is an integral domain by Theorem \ref{thm:domain}, $M$ is also an integral domain. So $\dim M$ is equal to the transcendental degree of the field of fractions $Q(M)$. Let $Q(M)(v_{n+1})$ be an extension field of $Q(M)$ by the vertex class $v_{n+1}$. We claim that $Q(\CC \otimes_{\ZZ}\cC(\Sigma_{0, n+1}))$ is a finite extension of $Q(M)(v_{n+1})$. By Proposition \ref{prop:dimension}, $Q(\CC \otimes_{\ZZ}\cC(\Sigma_{0, n+1}))$ is generated by the curve classes for edges in a triangulation of $\Sigma_{0, n+1}$. 

Fix a triangulation $T'$ of $P$. By adding $n$ rays $e_{i}$ that each connect $v_{i}$ and $v_{n+1}$, we can make a triangulation $T$ of $S^{2}$ with $n+1$ vertices. We use it to construct a transcendental basis of $Q(\CC \otimes_{\ZZ}\cC(\Sigma_{0, n+1}))$. Note that all edges from $T'$ are already in $Q(M)$. 

For each $v_{i}$, consider an arc class $\alpha_{i}$ that starts from $v_{i}$, moves around $v_{n+1}$ clockwise, and comes back to $v_{i}$ as in Figure \ref{fig:alpha}. Then, $\alpha_{i} \in \mathrm{im}\; j_{\#} = M$. By applying the Puncture-Skein relation at $v_{n+1}$ and utilizing the fact that $\omega_{i} = 0$ when $q^{\frac{1}{2}} = 1$ (Example \ref{ex:skeinexample}), we obtain $\alpha_{i} = v_{n+1}e_{i}^{2}$. Thus, $e_{i}^{2} \in Q(M)(v_{n+1})$ and $e_{i}$ is in the finite extension of $Q(M)(v_{n+1})$. Therefore, $Q(\CC \otimes_{\ZZ}\cC(\Sigma_{0, n+1}))$ is a finite extension of $Q(M)(v_{n+1})$. 

\begin{figure}[!ht]
\begin{minipage}{0.8in}\includegraphics[width=\textwidth]{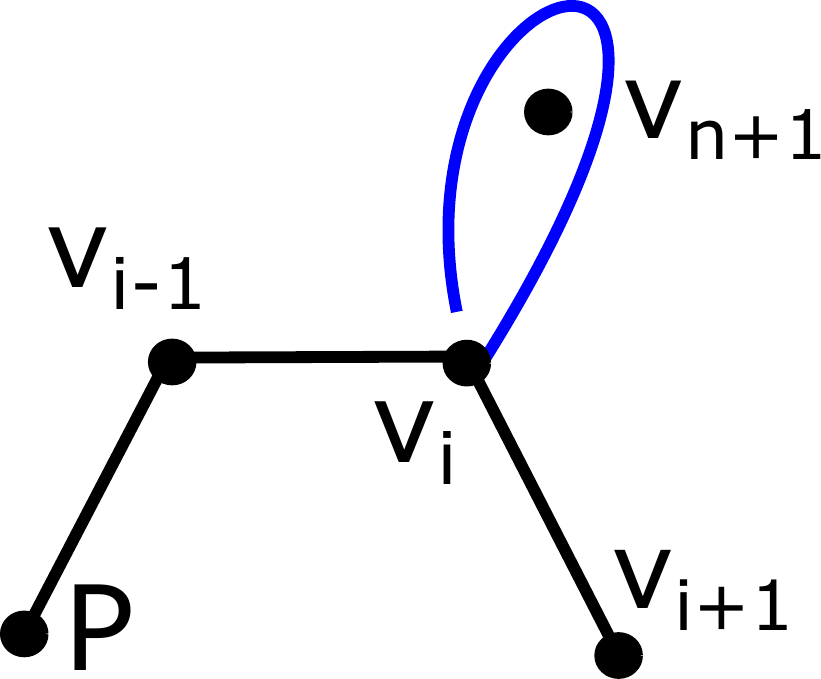}\end{minipage}
\caption{The curve class $\alpha_{i}$}
\label{fig:alpha}
\end{figure}

Since a finite extension does not change the transcendental degree, 
\[
\begin{split}
	\dim \CC\otimes_{\ZZ}\cC(\Sigma_{0,n+1}) &= \mathrm{tr.deg} \;Q(\CC \otimes_{\ZZ}\cC(\Sigma_{0, n+1})) = \mathrm{tr.deg} \;Q(M)(v_{n+1})\\
	&\le \mathrm{tr.deg}\;Q(M) + 1 = \dim M + 1.
\end{split}
\]
Thus, $3n - 4 \le \dim M < \dim \CC \otimes_{\ZZ}\cC(\RR^{2}_{n}) \le 3n - 3$. Therefore, $\dim \CC \otimes_{\ZZ}\cC(\RR^{2}_{n}) = 3n - 3$. 
\end{proof}

Now consider the quantum case. Note that there is a commutative diagram 
\begin{equation}\label{eqn:commdiagram}
	\xymatrix{
	S_{q, n} \ar[r]^{\bar{f}} \ar[d]_{\pi = /(q^{\frac{1}{2}}-1)} & \cA_{q}(\RR^{2}_{n}) \ar[d]^{\rho = /(q^{\frac{1}{2}}-1)}\\
	R_{n} \otimes_{\ZZ}S \ar[r]^{\bar{f}} & \cC(\RR^{2}_{n}).}
\end{equation}

\begin{lemma}\label{lem:notorsion}
If $g \in S_{q, n}$ is nonzero, then $(q^{\frac{1}{2}}-1)^{k}g \ne 0$ for all $k > 0$.
\end{lemma}

\begin{proof}
Suppose that $(q^{\frac{1}{2}}-1)^{k}g = 0$. Since $\bar{f} : S_{q, n} \to \cA_{q}(\RR^{2}_{n})$ is an $R_{q, n}$ module homomorphism, $\bar{f}((q^{\frac{1}{2}}-1)^{k}g) = (q^{\frac{1}{2}}-1)^{k}\bar{f}(g) = 0$. Since $\cC(\RR^{2}_{n}) \cong R_{n} \otimes_{\ZZ}S$ is an integral domain (Theorem \ref{thm:presentationCR2n}), by Theorem \ref{thm:classicalimpliesquantum}, $\cA_{q}(\RR^{2}_{n})$ is also a domain. Thus $\bar{f}(g) = 0$ and $g \in \ker \bar{f}$ (It also follows from the freeness in Proposition \ref{prop:freeness}.). Then $g\in \ker \pi$, as $(\bar{f} \circ \pi)(g) = (\rho \circ \bar{f})(g) = 0$ and $\bar{f} : R_{n} \otimes_{\ZZ}S \to \cC(\RR^{2}_{n})$ is an isomorphism. 

By applying Ptolemy relations, we know that every element in $S_{q, n}$ can be written as a $R_{q, n}$-linear combination of non-crossing monomials. By the Quantum commutation relations and induction on the total degree, every element can be written as an $R_{q, n}$-linear combination of monomials in $B$. In particular, we may write $g = \sum c_{I}\beta_{I}$ as a linear combination of monomials in $B$. Then $0 = \pi(g) = \sum \bar{c}_{I}\beta_{I}$, where $\bar{c}_{I}$ is the image of $c_{I}$ by the map $R_{q, n} \to R_{n}$ sending $q^{\frac{1}{2}}$ to one. Since $R_{n} \otimes_{\ZZ}S$ is a free $R_{n}$-module with a basis $B$, $\bar{c}_{I} = 0$ for all $I$. In other words, $c_{I}$ is a multiple of $q^{\frac{1}{2}}-1$. Therefore, $g = \sum c_{I}\beta_{I} = \sum (q^{\frac{1}{2}}-1)c_{I}'\beta_{I} = (q^{\frac{1}{2}}-1)g'$ for some $g' \in S_{q, n}$. But now $(q^{\frac{1}{2}}-1)^{k+1}g' = (q^{\frac{1}{2}}-1)^{k}g = 0$, so by the same argument we can continue to factor $g'$. However, this procedure must be terminated as $\cA_{q}(\RR^{2}_{n})$ is a free $R_{q, n}$-module, so it is not divisible. Thus, we have a contradiction. 
\end{proof}

\begin{proposition}\label{prop:quantumbasis}
The algebra $S_{q, n}$ is a free $R_{q, n}$-module with a basis $B$.
\end{proposition}

\begin{proof}
As before, every element can be written as an $R_{q, n}$-linear combination of monomials in $B$. Suppose there is a nontrivial relation $\sum c_{I}\beta_{I} = 0$ where $\beta_{I} \in B$. If all of $c_{I}$ has $q^{\frac{1}{2}}-1$ as a common factor, then we may write $\sum c_{I}\beta_{I} = (q^{\frac{1}{2}}-1)\sum c_{I}'\beta_{I}$. Then $\sum c_{I}'\beta_{I} = 0$ due to Lemma \ref{lem:notorsion}. By dividing the relation by an appropriate power of $q^{\frac{1}{2}}-1$, we may assume that the coefficients of $\sum c_{i}\beta_{I}$ do not have $q^{\frac{1}{2}}-1$ as a common factor. Then, by setting $q^{\frac{1}{2}} = 1$, we obtain a relation $\sum \bar{c}_{I}\beta_{I} = 0$ in $R_{n} \otimes_{\ZZ} S$. Because one of the $c_{I}$'s does not have $q^{\frac{1}{2}}-1$ as a factor, $\bar{c}_{I}$ is nonzero,  violating the freeness of $R_{n} \otimes_{\ZZ}S$. 
\end{proof}

\begin{proof}[Proof of Theorem \ref{thm:presentationAR2n}]
Pick a nonzero $g \in \ker \bar{f}$ for $\bar{f} : S_{q, n} \to \cA_{q}(\RR^{2}_{n})$. Since $S_{q, n}$ is a free $R_{q, n}$-module (Proposition \ref{prop:quantumbasis}), we may express $g = \sum c_{I}\beta_{I}$ as an $R_{q, n}$-linear combination of monomials in $B$ uniquely. As in the proof of Proposition \ref{prop:quantumbasis}, after dividing $g$ by an appropriate power of $q^{\frac{1}{2}} - 1$, we may assume that $\bar{g}$ is nonzero in $R_{n} \otimes_{\ZZ} S$. But from $\bar{f}(\bar{g}) = 0$ and the injectivity in the classical case, we obtain a contradiction. 
\end{proof}


\section{Presentation of $\cA_{q}(\Sigma_{0,n})$}\label{sec:presentationS0n}

Now we restate our main theorem and give the proof.

\begin{theorem}\label{thm:matinthm}
Let $J$ be the ideal generated by the Ptolemy relations, the Quantum commutation relations, the $\gamma$-relations, and the Big circle relation in Section \ref{sec:relations}. Then 
\[
	\cA_{q}(\Sigma_{0, n}) \cong R_{q, n} \langle \beta_{ij}\rangle/J.
\]
\end{theorem}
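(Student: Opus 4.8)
The plan is to imitate the strategy of Theorem \ref{thm:presentationAR2n}, using the inclusion-induced map $\iota_{\#}$ of \eqref{eqn:imap} to carry that planar result over to the sphere. First I would build the map: the universal property of the free algebra produces a homomorphism $R_{q,n}\langle\beta_{ij}\rangle \to \cA_{q}(\Sigma_{0,n})$ sending each generator to the class $\beta_{ij}$, and I would verify that every generator of $J$ dies under it. The Ptolemy and Quantum commutation relations are special cases of the Skein and Puncture-Skein relations; the $\gamma$-relations hold because $\gamma_{ij}^{+}$ and $\gamma_{ij}^{-}$ are regular isotopic on $\Sigma_{0,n}$; and the Big circle relation is the Framing relation applied to the contractible loop $\delta$. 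Hence the map descends to $\bar{g} : R_{q,n}\langle\beta_{ij}\rangle/J \to \cA_{q}(\Sigma_{0,n})$, which is surjective by Proposition \ref{prop:generators}. Everything then rests on the injectivity of $\bar{g}$.

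Since $J$ contains the defining relations of $S_{q}$, Theorem \ref{thm:presentationAR2n} lets me rewrite the source as a quotient of the algebra already understood, namely $R_{q,n}\langle\beta_{ij}\rangle/J \cong \cA_{q}(\RR^{2}_{n})/J'$, where $J'$ is generated by the images of the $\gamma$-relations and the Big circle relation; under this identification $\bar{g}$ is exactly the map induced by $\iota_{\#}$, so $J' \subseteq \ker\iota_{\#}$ and the goal is $J' = \ker\iota_{\#}$. As in the planar case I would first settle the classical limit $q^{\frac12}=1$ and then lift. The lift requires a freeness statement for $R_{q,n}\langle\beta_{ij}\rangle/J$ over $R_{q,n}$ in the spirit of Proposition \ref{prop:quantumbasis}: straighten into non-crossing monomials via Ptolemy and Quantum commutation, reduce to a distinguished spanning set using the explicit expansions in Lemmas \ref{lem:gammarelationdescription} and \ref{lem:bigcirclerelation}, and deduce $\ZZ[q^{\pm\frac12}]$-freeness by specialization to the classical basis. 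Granting this, any $g \in \ker\bar{g}$ can be divided by the largest possible power of $q^{\frac12}-1$ and then reduced modulo $q^{\frac12}-1$ to a nonzero element annihilated by the classical map, contradicting classical injectivity.

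The core is thus the classical assertion that $\bar{g} : \cC(\RR^{2}_{n})/J' \to \cC(\Sigma_{0,n})$ is an isomorphism. Both are integral domains---$\cC(\RR^{2}_{n})\cong R_{n}\otimes_{\ZZ}S$ with $S$ a domain (Remark \ref{rem:ringS}), and $\cC(\Sigma_{0,n})$ by Theorem \ref{thm:domain}---of dimensions $3n-3$ and $3n-6$ (Propositions \ref{prop:dimCR2n} and \ref{prop:dimension}), so $\ker\iota_{\#}$ is a prime of height $3$ containing $J'$. As a strictly larger prime would have strictly larger height, it is enough to prove that $\cC(\RR^{2}_{n})/J'$ is itself an integral domain of dimension $3n-6$; then the surjection $\bar{g}$ onto a domain of equal dimension can have no nonzero kernel, exactly as in Theorem \ref{thm:presentationAR2n}.

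I expect this reduced-and-irreducible-of-dimension-$3n-6$ claim for the source to be the main obstacle, since it is the genuine injectivity content and does not follow formally from surjectivity (a reducible or non-reduced quotient of dimension $3n-6$ would still surject onto $\cC(\Sigma_{0,n})$). For the dimension I would check that $J'$ cuts codimension three: $\delta+2$ is a nonzero, hence non-zero-divisor, element of the domain $\cC(\RR^{2}_{n})$, and I would produce two further $\gamma$-relations completing it to a regular sequence, reading off their leading behavior from Lemmas \ref{lem:gammarelationdescription} and \ref{lem:bigcirclerelation}. For irreducibility and reducedness I would lean on the straightening-law description of $S$ in Remark \ref{rem:basis}, either by exhibiting a localization of $\cC(\RR^{2}_{n})/J'$ that is a Laurent polynomial ring of dimension $3n-6$, in the manner of Proposition \ref{prop:dimension}, or by verifying directly that $J'$ is prime against the non-crossing monomial basis. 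The borderline small cases $n\le 2$, outside the domain range, would be treated separately as in Remark \ref{rem:smallnintro}.
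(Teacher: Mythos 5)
Your setup and reduction are correct and match the paper's skeleton: building $\bar{g}$, identifying the source with $\cA_{q}(\RR^{2}_{n})/J'$ via Theorem \ref{thm:presentationAR2n}, reducing everything to $J' = \ker\iota_{\#}$, and the dimension-theoretic logic (a surjection between affine domains of equal dimension has zero kernel) is sound. But there is a genuine gap at exactly the point you flag: the claim that $\cC(\RR^{2}_{n})/J'$ is an integral domain of dimension $3n-6$. This is not a technical step that can be deferred --- it is essentially equivalent to the (classical case of the) theorem itself, since granting it the conclusion follows in one line, and none of the strategies you offer amounts to an argument for it. In the planar case the analogous step worked only because of an external input: $S$ is the homogeneous coordinate ring of $\mathrm{Gr}(2,n)$, known independently to be a domain of known dimension (Remark \ref{rem:ringS}). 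There is no such identification of $\cC(\RR^{2}_{n})/J'$ with a previously understood ring, so primality of the explicit ideal $J'$ (generated by the unwieldy expansions in Lemmas \ref{lem:gammarelationdescription} and \ref{lem:bigcirclerelation}) would have to be proved by hand; ``verifying directly against the non-crossing basis'' or ``exhibiting a Laurent localization'' names the goal rather than a method. Your regular-sequence idea at best controls dimension (height $\ge 3$), not reducedness or irreducibility, which, as you yourself observe, are the genuine injectivity content.

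The paper's proof supplies precisely the missing mechanism, and it is topological rather than algebro-geometric: since two multicurves in $\RR^{2}_{n}$ can only become regular isotopic in $\Sigma_{0,n}$ by sliding strands across the removed point $p$, the kernel of $\iota_{\#}$ is generated by the differences $\alpha - \alpha^{c}$ of Figure \ref{fig:extrarelation}, and Proposition \ref{prop:kerisK} shows each such difference lies in $K$ by induction on a complexity measure $c(\alpha)=(w(\alpha),e(\alpha))$, using the Puncture-Skein relation to strictly decrease complexity, with the $\gamma$-relations and Big circle relation appearing exactly as the base cases. Note also that this argument is carried out directly in the quantum algebra, which eliminates a secondary gap in your plan: your classical-to-quantum lift needs freeness of the abstract quotient $R_{q,n}\langle\beta_{ij}\rangle/J$ over $\ZZ[q^{\pm\frac{1}{2}}]$ (so that division by powers of $q^{\frac{1}{2}}-1$ makes sense and terminates), and producing a spanning set whose classical specialization is a basis of $\cC(\Sigma_{0,n})$ is again nontrivial and unproven in your sketch; the paper never needs any of this because $\ker\iota_{\#} = K$ is established at the quantum level from the start.
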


\begin{proof}
By Theorem \ref{thm:presentationAR2n}, it is sufficient to show that $\cA_{q}(\RR^{2}_{n})/K \cong \cA_{q}(\Sigma_{0, n})$, where $K$ is the ideal generated by the $\gamma$-relations and the Big circle relation in Definitions \ref{def:gammarelations} and \ref{def:bigcircleistrivial}. 

Recall that there is a functorial morphism $\iota_{\#} : \cA_{q}(\RR^{2}_{n}) \to \cA_{q}(\Sigma_{0, n})$. This map is surjective because any regular isotopy class of a multicurve in $\Sigma_{0, n}$ can be represented by a multicurve in $\RR^{2}_{n}$ by avoiding the point $p \in S^{2} \setminus \RR^{2}$. Thus, it is sufficient to show that $\ker \iota_{\#} = K$. It is clear that $K \subset \ker \iota_{\#}$. 

To find the extra relations, we use an argument imitating the `handle slide lemma' in \cite[Proposition 2.2.(5)]{Przytycki99} (see also \cite[Section 3]{BullockLoFaro05}). Observe that two non-isotopic curves $\alpha_{1}$ and $\alpha_{2}$ in $\RR^{2}_{n}$ can be isotopic in $\Sigma_{0, n}$ (i.e., $\iota_{\#}(\alpha_{1}) = \iota_{\#}(\alpha_{2})$) because in $\Sigma_{0, n}$, some strands of $\alpha_{1}$ can freely cross $p \in S^{2} \setminus \RR^{2}$. Indeed, this is the only reason for the difference in the regular isotopy classes in these two surfaces. This is because if we fix two points on the dashed boundary circle in Figure \ref{fig:extrarelation}, then there are only two regular isotopy classes of embedded arcs in the dashed circle minus $p$ with two fixed boundary points. Therefore, $\ker \iota_{\#}$ is generated by the relations in Figure \ref{fig:extrarelation}. Note that except the difference near $p$, the remaining part of the multicurves are the same. 

\begin{figure}[!ht]
\begin{minipage}{0.5in}\includegraphics[width=\textwidth]{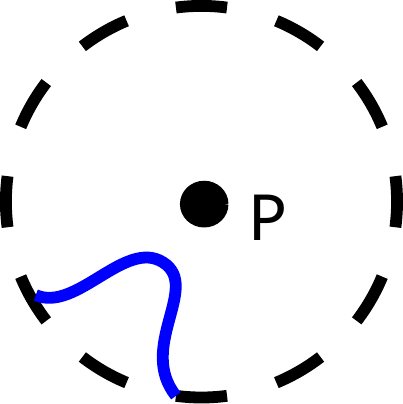}\end{minipage} $\;-\;$ \begin{minipage}{0.5in}\includegraphics[width=\textwidth]{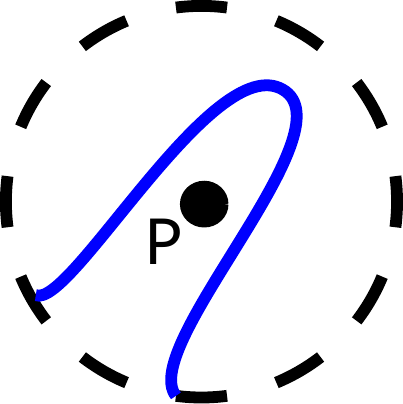}\end{minipage}
\caption{Generators of $\ker \iota_{\#}$}
\label{fig:extrarelation}
\end{figure}

Therefore, to complete the proof, it is sufficient to show that each relation in Figure \ref{fig:extrarelation} is in $K$. This is proved in Proposition \ref{prop:kerisK}.
\end{proof}

\begin{proposition}\label{prop:kerisK}
Every element in Figure \ref{fig:extrarelation} is in $K$.
\end{proposition}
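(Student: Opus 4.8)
The plan is to identify the generators shown in Figure \ref{fig:extrarelation} explicitly and then exhibit each of them as an element of the ideal $K$ generated by the $\gamma$-relations and the Big circle relation. Such a generator is a difference $\Gamma_{1} - \Gamma_{2}$ of two multicurves in $\RR^{2}_{n}$ that agree outside a small disk $U$ containing the filled-in puncture $p$ and differ inside $U$ by an isotopy pushing part of the diagram across $p$. The first step is to decompose the isotopy carrying $\iota_{\#}(\Gamma_{1})$ to $\iota_{\#}(\Gamma_{2})$ in $\Sigma_{0, n}$ into a sequence of elementary moves, each sliding a single strand across $p$, and to write the resulting telescoping sum. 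This reduces the problem to an \emph{elementary} generator, in which $\Gamma_{1}$ and $\Gamma_{2}$ differ only by the position, relative to $p$, of one strand.

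For an elementary generator there are two cases. If the strand being moved belongs to an arc, then passing it on the clockwise versus the counterclockwise side of $p$ replaces a sub-arc running around the outside of $P$ by its mirror; in the simplest configuration, where no other strand runs near $p$, the two multicurves are precisely $\gamma_{ij}^{+}$ and $\gamma_{ij}^{-}$ for the relevant endpoints $v_{i}, v_{j}$, so their difference is a defining generator of $K$. If instead the moving strand closes up into a loop encircling $p$, that loop is isotopic in $\RR^{2}_{n}$ to the big circle $\delta$, while in $\Sigma_{0, n}$ it collapses across $p$ to the scalar $-q^{2} - q^{-2}$; the difference is then the Big circle relation $\delta - (-q^{2} - q^{-2}) \in K$.

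The main obstacle is that in general the moving strand is entangled with the other, common strands of $\Gamma_{1}$ and $\Gamma_{2}$ that also pass near $p$ or share a puncture with it, so that $\Gamma_{1} - \Gamma_{2}$ does not literally equal a generator of $K$. I would handle this by viewing the $\gamma$- and Big circle relations as \emph{local} relations supported in a disk and reincorporating the common part of the diagram through the two-sided ideal structure of $K$: after an isotopy that separates the common inner portion from the outer arc in the vertical $(0,1)$-direction, the glued relation becomes a stacking product of the common multicurve with one of the defining generators, hence an element of the ideal. Strands sharing a puncture with the moving arc are disentangled beforehand by the Puncture-Skein relation, which is respected by $\iota_{\#}$; applied to the crossings nearest $p$, it rewrites $\Gamma_{1} - \Gamma_{2}$ as an $R_{q, n}$-linear combination of elementary generators of strictly smaller complexity, setting up an induction whose base case is the single-strand situation above. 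The delicate point is the topological bookkeeping ensuring that each resolution strictly decreases the number of strands meeting $U$ while preserving agreement away from $p$, so that the induction terminates at the elementary $\gamma$- and Big circle relations.
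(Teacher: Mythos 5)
Your setup and the role you assign to the $\gamma$- and Big circle relations are right, but there is a genuine gap at the base case of your induction, and it sits exactly where the real work lies. You reduce, by resolving crossings near $p$ and invoking the two-sided ideal structure of $K$, to the situation of a single unentangled strand meeting the disk $U$ around $p$, and you then assert that the two multicurves are ``precisely $\gamma_{ij}^{+}$ and $\gamma_{ij}^{-}$'' (or $\delta$ versus a collapsible loop). This is false: the component containing that strand can be arbitrarily complicated away from $p$. A reduced arc from $v_{i}$ to $v_{j}$ may pass near $p$ and still weave in and out of the polygon $P$ many times without any crossings (see the arc of Figure \ref{fig:complexityexample}); a simple loop with a strand near $p$ may enclose a proper nonempty subset of the punctures, in which case neither it nor its pushed-across version is isotopic to $\delta$ or to a trivial loop, although crossing $p$ does change its class in $\RR^{2}_{n}$. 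Your complexity measure --- the number of strands meeting $U$ --- is blind to all of this winding, which happens far from $p$, so your induction terminates at a case that still contains essentially the whole problem. Relatedly, the premise that the $\gamma$-relations are ``local relations supported in a disk'' is misleading: $\gamma_{ij}^{\pm}$ wind around the entire polygon, hence around all $n$ punctures. The ideal-structure trick for absorbing a disjoint common part is fine, but it only applies once the moving component, by itself, literally is $\gamma_{ij}^{\pm}$, which is the very reduction that is missing.

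What is needed is a second induction that simplifies the moving curve itself, and this is how the paper proceeds: first resolve \emph{all} crossings (everywhere, not only near $p$), so the strand crossing $p$ lies in a crossing-free multicurve; then convert simple loops into arcs by the Puncture-Skein recursion of Figure \ref{fig:recursion}, which also disposes of the loop case above; finally, for a reduced arc $\alpha$, induct on the complexity $c(\alpha)=(w(\alpha),e(\alpha))$, where $w(\alpha)$ counts components of $\alpha\cap\mathrm{int}\,P$, by pushing a component of $\alpha\cap\mathrm{int}\,P$ onto a vertex $v_{i}$ of $P$ and applying the Puncture-Skein relation \emph{there} --- at a puncture far from $p$, the opposite of your ``crossings nearest $p$'' --- to write $\alpha = q^{-\frac{1}{2}}v_{i}\delta_{1}\delta_{2} - q^{-1}\epsilon$ with all terms of strictly smaller complexity. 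Only after this reduction does one reach arcs lying entirely outside $P$, which honestly are $\gamma_{ij}^{\pm}$, so that $\alpha-\alpha^{c}=\pm(\gamma_{ij}^{+}-\gamma_{ij}^{-})\in K$. Without a mechanism of this kind, your argument does not terminate at the defining generators of $K$.
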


\begin{proof}

In this proof, we mainly use multicurves instead of their regular isotopy classes. Whenever we want to describe their regular isotopy classes, we will explicitly mention it. 

Suppose that there is a multicurve $\alpha$ with a strand sufficiently close to $p$. Let $\alpha^{c}$ be a new multicurve that is obtained by crossing $p$. In other words, $\alpha - \alpha^{c}$ is the relation in Figure \ref{fig:extrarelation}. 

\textsf{Step 1.} Reduction to curves without crossings. 

The Skein relations, the Puncture-Skein relations, and the `crossing $p$' relations are completely local. For any multicurve $\alpha$, by applying Puncture-Skein relations and Skein relations repeatedly, we may obtain an $R_{q, n}$-linear combination $\alpha = \sum c_{I}\alpha_{I}$ of multicurves without any crossings (but the multicurves may have some trivial loops and punctured loops -- so this may not be a linear combination of reduced multicurves). For each $\alpha_{I}$, there is a unique connected component that contains the strand crossing $p$, and $\alpha_{I}^{c}$ is obtained by applying this crossing operation for the connected component. Thus, if we know that $\alpha = \alpha^{c} \in \cA_{q}(\RR^{2}_{n})/K$ for any curves without crossings, then $\sum c_{I}\alpha_{I} = \sum c_{I}\alpha_{I}^{c}$ in $\cA_{q}(\RR^{2}_{n})/K$. After that, we may apply all of the Skein relations and Puncture-Skein relations backward to get $\alpha^{c}$. Thus, $\alpha - \alpha^{c} \in K$. 

\textsf{Step 2.} Reduction to reduced arcs.

Now, suppose that $\alpha$ is a curve without intersection. Then, by using the Puncture-Skein relations only (see Figure \ref{fig:recursion} or \cite[Proposition 2.2]{BKPW16Involve}), we may describe $\alpha$ as a polynomial with respect to (1) reduced arcs, (2) regular isotopy classes of waterdrops $\omega_{i}$ and $\overline{\omega_{i}}$, and (3) the regular isotopy class of the trivial loop. If $\alpha$ is a curve isotopic to the trivial loop, then $\alpha - \alpha^{c} = -(\delta + q^{2} + q^{-2}) \in K$ follows from the Big circle relation. If $\alpha$ is a curve isotopic to a waterdrop $\omega_{i}$ (Example \ref{ex:skeinexample}), then $\alpha - \alpha^{c} = -(\gamma_{ii}^{+} - \omega_{i}) \in K$. The case of $\overline{\omega_{i}}$ is also obtained by Remark \ref{rem:oppositewaterdrop}. Therefore, by a similar argument to that in Step 1, it is now sufficient to prove the statement for reduced arcs.

\textsf{Step 3.} Complexity measure for reduced arcs. 

Let $\alpha$ be a reduced arc connecting two vertices $v_{i}$ and $v_{j}$. From the reducedness (Definition \ref{def:reducedcurves}), the two end vertices are different, so $v_{i} \ne v_{j}$. We define $w(\alpha)$ as the number of connected components of $\alpha \cap \mathrm{int}\; P$. Each connected component $\sigma$ of $\alpha \cap \mathrm{int} \;P$ divides $P$ into two components. Let $e(\sigma)$ be the smaller number of vertices in one of the components, not counting the end vertices of $\sigma$. We define $e(\alpha) = \mathrm{min}\; \{e(\sigma)\}$. If there is no component in $P$, we set $e(\alpha) = 0$. Finally, for each reduced arc $\alpha$, we define its \emph{complexity} as $c(\alpha) = (w(\alpha), e(\alpha)) \in \NN^{2}$ (see Figure \ref{fig:complexityexample} for an example). Give the lexicographical order on $\NN^{2}$. 

\begin{figure}[!ht]
\begin{minipage}{0.9in}\includegraphics[width=\textwidth]{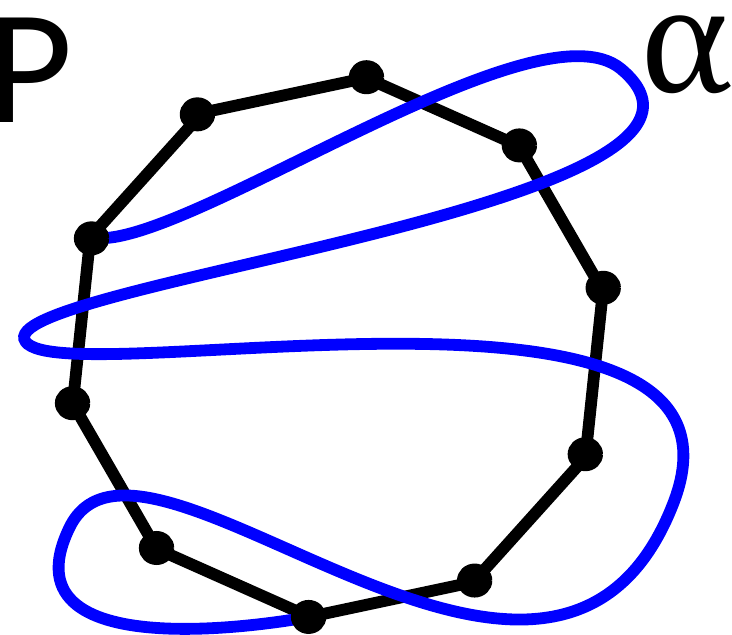}\end{minipage}
\caption{A curve $\alpha$ with $c(\alpha) = (4, 2)$}
\label{fig:complexityexample}
\end{figure}

\textsf{Step 4.} Proof for the reduced arcs. 

We use transfinite induction on the complexity of $\alpha$. If $w(\alpha) = 0$, then $\alpha$ is an arc on the outside of $P$, and is regularly isotopic to $\gamma_{ij}^{+}$ or $\gamma_{ij}^{-}$. Thus, $\alpha - \alpha^{c} = \pm (\gamma_{ij}^{+} - \gamma_{ij}^{-}) \in K$. Note, that in this case, $c(\alpha) = (0, 0)$ as there is no component inside $P$. 

Suppose that the statement is true for all reduced arcs with complexity less than $(r, s)$ and let $\alpha$ be a reduced arc with $c(\alpha) = (w(\alpha), e(\alpha)) = (r, s)$. Then, $r \ge 1$, and there must be a component $\sigma$ of $\alpha \cap \mathrm{int}\; P$ such that $e(\sigma) = e(\alpha)$. If $e(\alpha) = e(\sigma) = 0$, then $\alpha$ has a turn-back near $\sigma$ (Figure \ref{fig:turnback}). By moving $\sigma$ to the outside of $P$, we obtain a new reduced arc $\alpha'$, which is isotopic to $\alpha$ but $c(\alpha') < c(\alpha)$, as $w(\alpha') = w(\alpha) - 1$. Then, by our induction hypothesis, in $\cA_{q}(\RR^{2}_{n})/K$, $\alpha = \alpha' = {\alpha'}^{c} = \alpha^{c}$.

\begin{figure}[!ht]
\begin{minipage}{0.8in}\includegraphics[width=\textwidth]{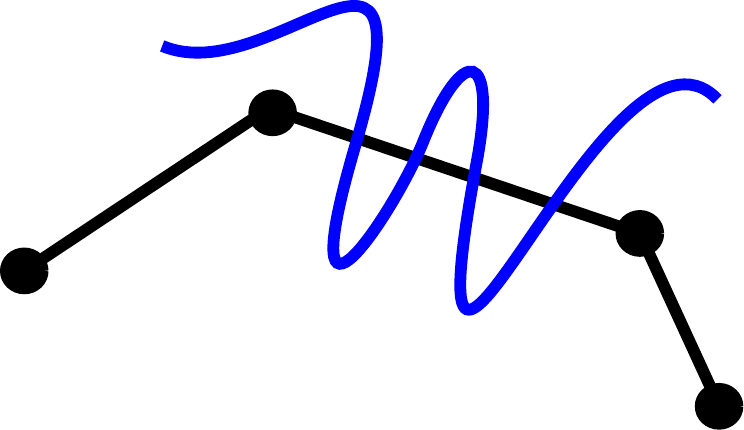}\end{minipage} $\; \Rightarrow \;$ \begin{minipage}{0.8in}\includegraphics[width=\textwidth]{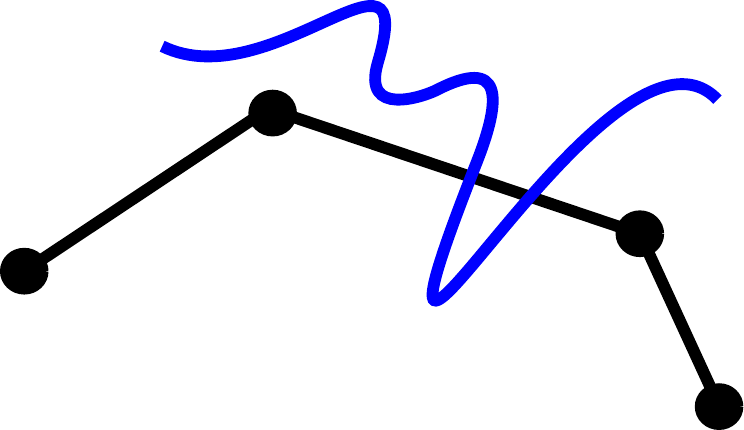}\end{minipage} \quad \quad
\begin{minipage}{0.8in}\includegraphics[width=\textwidth]{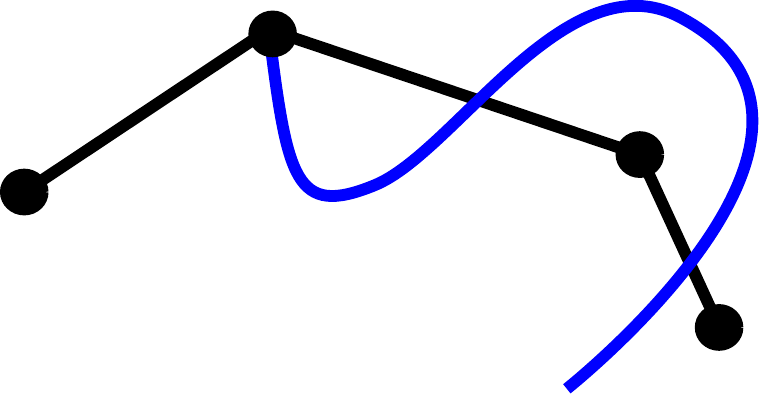}\end{minipage} $\; \Rightarrow \;$ \begin{minipage}{0.8in}\includegraphics[width=\textwidth]{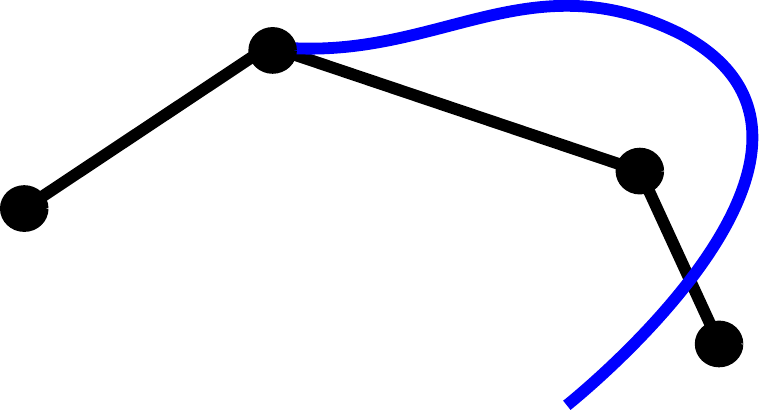}\end{minipage}
\caption{Two types of turn-backs and their removal}
\label{fig:turnback}
\end{figure}

Suppose now that $e(\alpha) > 0$. Pick a component $\sigma$ of $\alpha \cap \mathrm{int}\; P$ such that $e(\sigma) = e(\alpha)$. We deform $\sigma$ toward the region which contains $|e(\sigma)|$ vertices until one end of $\sigma$ hits a vertex (say $v_{i}$). By the Puncture-Skein relation, we obtain $\alpha = q^{-\frac{1}{2}}v_{i}\delta_{1}\delta_{2} - q^{-1}\epsilon$ (Figure \ref{fig:complexitydeduction}). Then, $c(\delta_{1})$, $c(\delta_{2})$, and $c(\epsilon)$ are strictly less than $c(\alpha)$. Without loss of generality, we may assume that $\delta_{2}$ is the curve containing the part that crosses $p$. By our induction hypothesis, in $\cA_{q}(\Sigma_{0, n})/K$, 
\[
	\alpha = q^{-\frac{1}{2}}v_{i}\delta_{1}\delta_{2} - q^{-1}\epsilon = q^{-\frac{1}{2}}v_{i}\delta_{1}\delta_{2}^{c} - q^{-1}\epsilon^{c} = \alpha^{c}.
\]
Therefore, $\alpha - \alpha^{c} \in K$. 
\end{proof}

\begin{figure}[!ht]
\begin{minipage}{0.8in}\includegraphics[width=\textwidth]{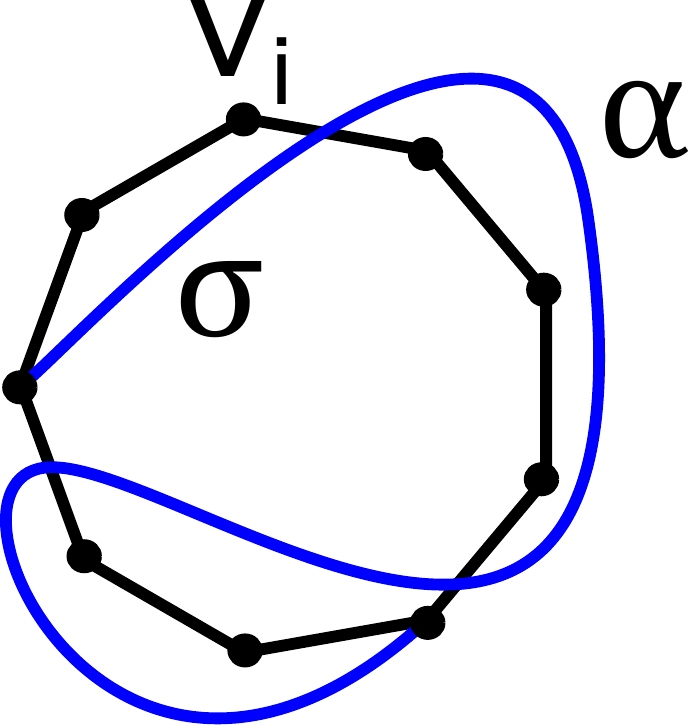}\end{minipage} $\; = \; q^{-\frac{1}{2}}v_{i}\;$ \begin{minipage}{0.8in}\includegraphics[width=\textwidth]{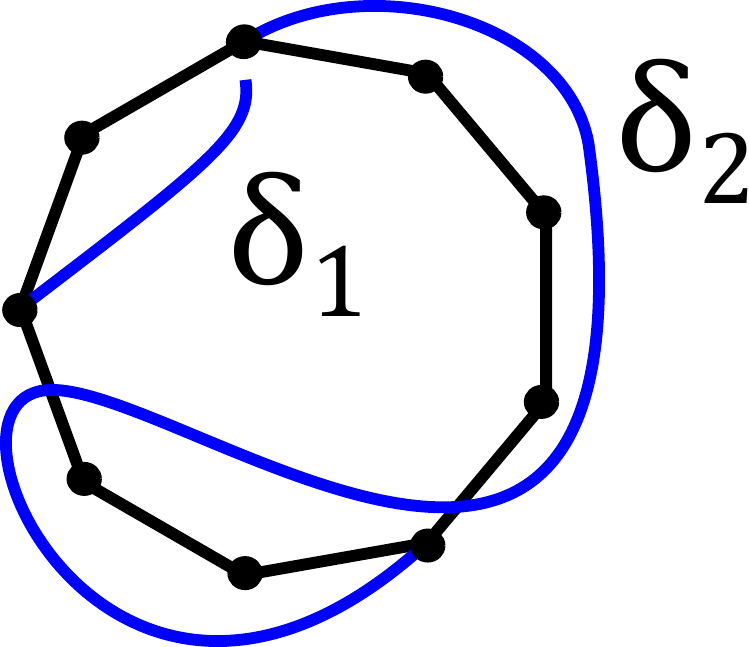}\end{minipage} $\; - \; q^{-1}\;$ \begin{minipage}{0.8in}\includegraphics[width=\textwidth]{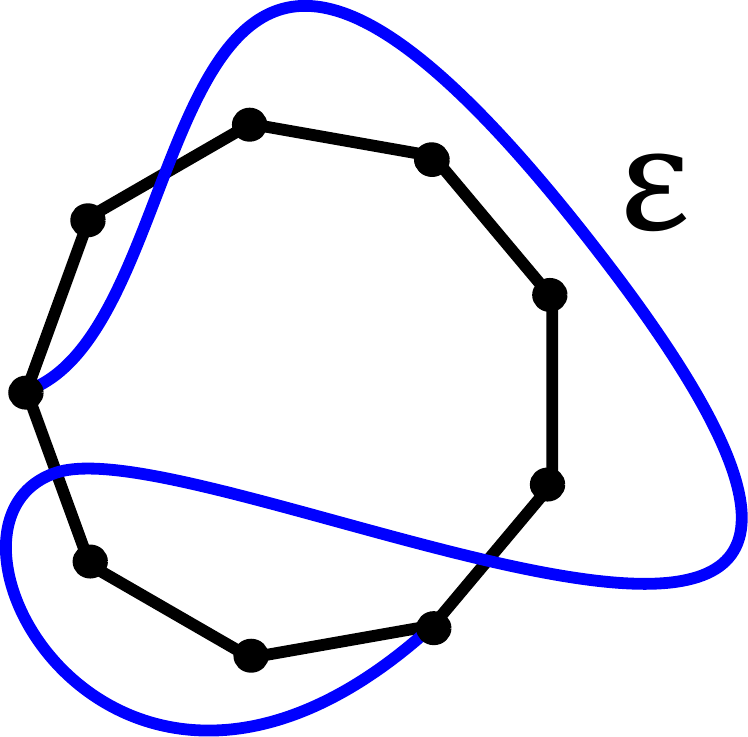}\end{minipage}
\caption{A complexity deduction $\alpha = q^{-\frac{1}{2}}v_{i}\delta_{1}\delta_{2} - q^{-1}\epsilon$}
\label{fig:complexitydeduction}
\end{figure}

\begin{remark}\label{rem:smalln}
It is straightforward to verify the coincidence of our presentation in Theorem \ref{thm:matinthm} with the one in \cite{BKPW16Involve} for $n \le 3$. By a direct computation, we have 
\[
	\cA_{q}(\Sigma_{0, 2}) \cong \ZZ[q^{\pm\frac{1}{2}}, v_{1}^{\pm}, v_{2}^{\pm}, \beta_{12}]/(v_{1}v_{2}\beta_{12}^{2} - 2 + q^{2} + q^{-2}),
\]
\[
	\cA_{q}(\Sigma_{0, 1}) \cong \ZZ[q^{\pm\frac{1}{2}}, v_{1}^{\pm}]/((q+q^{-1}) + (q^{2} + q^{-2})) \cong \ZZ[q^{\pm\frac{1}{2}}, v_{1}^{\pm}]/((q+q^{-1}-1)(q+q^{-1}+2)),
\]
\[
	\cA_{q}(\Sigma_{0, 0}) \cong \ZZ[q^{\pm \frac{1}{2}}].
\]
These presentations show the pathological behavior of $\cA_{q}(\Sigma_{0, n})$ for small $n$. For $n \le 2$, $\cA_{q}(\Sigma_{0, 2})$ is commutative. $\cC(\Sigma_{0, 2}) \cong \ZZ[v_{1}^{\pm}, v_{2}^{\pm}, \beta_{12}]/(\beta_{12}^{2})$ and $\cA_{q}(\Sigma_{0, 1})$ are not integral domains, and $\cA_{q}(\Sigma_{0, 1})$ is not a free $\ZZ[q^{\pm \frac{1}{2}}, v_{1}^{\pm}]$-module anymore. 
\end{remark}

\bibliographystyle{alpha}

\end{document}